\documentclass[a4paper,12pt]{article}
\usepackage[english]{babel}
\usepackage[T1]{fontenc}
\usepackage[latin1]{inputenc}
\usepackage{graphics, enumerate}
\usepackage{amsmath,amsfonts,amstext,amssymb,color,epsfig,bbm}


\evensidemargin 0pt
\oddsidemargin = 0 pt
\textheight = 610 pt
\textwidth = 420 pt

\numberwithin{equation}{section}

\newtheorem{stheorem}{Theorem}[section]
\newtheorem{definition}[stheorem]{Definition}
\newtheorem{proposition}[stheorem]{Proposition}
\newtheorem{lemma}[stheorem]{Lemma}
\newtheorem{corolary}[stheorem]{Corollary}
\newenvironment{proof}{{\it \underline{Proof}}\,:\ }{\qed $\\$}

\newenvironment{remark}{{\ \ \ \it Remark\,: }}{ \\}

\def\phi{\varphi}
\def\D{\Delta}

\def\tpar{/ \! /}
\def\norm{| \! | \! |}
\def\R{\mathbb{R}}
\def\E{\mathbb{E}}

\def\1{\mathbbm{1}}

\def\qed{\quad\hbox{$\vcenter{\vbox{
   \hrule height 0.4pt\hbox{\vrule width 0.4pt height 6pt
    \kern5pt\vrule width 0.4pt}\hrule height 0.4pt}}$}}
\def\mathpal#1{\mathop{\mathchoice{\text{\rm #1}}%
   {\text{\rm #1}}{\text{\rm #1}}%
   {\text{\rm #1}}}\nolimits}
\newcommand\Ric{\mathpal{Ric}}
\newcommand\Id{\mathpal{Id}}
\newcommand\ev{\mathpal{ev}}
\newcommand\Tr{\mathpal{Tr}}
\title{Brownian motion with respect to time-changing Riemannian metrics, applications to Ricci flow}

\author{K.A. Coulibaly}

\begin{document}
\date{}
\maketitle

\selectlanguage{english}


\begin{abstract}
We generalize Brownian motion on a Riemannian manifold to the case of a family of metrics which depends on time. Such questions are natural for equations like the heat equation with respect to time dependent Laplacians (inhomogeneous diffusions). In this paper we are in particular interested in the Ricci flow which provides an intrinsic family of time dependent metrics. We give a notion of parallel transport along this Brownian motion, and establish a generalization of the Dohrn-Guerra or damped parallel transport, Bismut integration by part formulas, and gradient estimate formulas. One of our main results is a characterization of the Ricci flow in terms of the damped parallel transport. At the end of the paper we give a canonical  definition of the damped parallel transport in terms of stochastic flows, and derive an intrinsic martingale which may provide information about singularities of the flow.
\end{abstract}

\section{ $g(t)$-Brownian motion }

Let $M$ be a compact connected $n$-dimensional manifold which carries a family of time-dependent Riemannian metrics $g(t)$. In this section we will give a generalization of the well known Brownian motion on $M$ which will depend on the family of metrics.  In other words, it will depend on the deformation of the manifold. Such family of metrics will naturally come from geometric flows like mean curvature flow or Ricci flow. The compactness assumption for the manifold is not essential.
Let $\nabla^{t}$ be the Levi-Civita connection associated to the metric $g(t)$, $\Delta_{t} $ the associated Laplace-Beltrami operator. 
Let also $(\Omega, (\mathcal{F}_{t})_{t \ge 0},\mathcal{F},\mathbb{P})$ be a complete probability space endowed with a filtration $(\mathcal{F}_{t})_{t \ge 0} $ satisfying ordinary assumptions like right continuity and $W$ be a $\R^{n}$-valued Brownian motion for this probability space. 

\begin{definition}
Let us take $(\Omega, (\mathcal{F}_{t})_{t \ge 0},\mathcal{F},\mathbb{P})$ and a $C^{1,2}$-family $g(t)_{t\in [0,T[}$ of metrics over $M$. An $M$-valued process $X(x)$ defined on $ \Omega \times [0,T[$ is called a $g(t)$-Brownian motion in $M$ started at $x\in M$ if $X(x)$ is continuous, adapted, and if for every smooth function $f$,
  $$ f(X_{s}(x)) -f(x) - \frac12 \int_{0}^{s} \D_{t}f (X_{t}(x)) \,dt  $$  
is a local martingale.  
\end{definition}

We shall prove existence of this inhomogeneous diffusion and give a notion of parallel transport along this process.

Let $(e_{i})_{i \in [1..d]}$ be an orthonormal basis of $\R^{n}$, $\mathcal{F}(M)$ the frame bundle over $M$, $\pi$ the projection to $M$. For any $ u \in \mathcal{F}(M)$, let $L_{i}(t,u) = h^{t}(u e_i)$ be the $ \nabla^{t} $ horizontal lift of $ue_i$ and $L_{i}(t)$ the associated vector field. Further let $ V_{\alpha, \beta}$ be the canonical basis of vertical vector fields over $\mathcal{F}(M)$ defined by $V_{\alpha, \beta} (u) = Dl_{u}(E_{\alpha, \beta})$ where $E_{\alpha, \beta}$ is the canonical basis of $\mathcal{M}_{n}(\R)$ and where
$$l_{u} : GL_{n}(\R) \to  \mathcal{F}(M) $$ is the left multiplication. Finally let $ (\mathcal{O}(M),g(t))$ be the $g(t)$ orthonormal frame bundle.

\begin{proposition}\label{EDS_1}
Assume that   $g(t)_{t\in [0,T[}$ is a $C^{1,2}(t,x)$-family of metrics over $M$, and 
 $$\begin{array}{rcl}
A : {[0,T[} \times {\cal{F}}(M) & \rightarrow & {\cal M}_n (\R) \\
      (t,U)    & \mapsto & (A_{\alpha , \beta} (t,U))_{\alpha , \beta}
\end{array}$$
is locally Lipschitz in $ U$ unformly in all compact of $t$. Consider the Stratonovich differential equation in $\mathcal{F}(M)$:
\begin{equation} \label{eds-U} \left\{ \begin{array}{l} 
  *dU_t = \sum_{i=1}^n L_i(t,U_t)*dW^i +\sum_{\alpha , \beta}A_{\alpha , \beta}(t,U_t)  V_{\alpha , \beta}(U_t) \,dt \\
  U_0 \in \mathcal{F}(M) \text{ such that } U_0 \in (\mathcal{O}(M),g(0)) .
\end{array}
\right .
\end{equation}\\
Then there is a unique symmetric choice for $A$ such that  $U_{t} \in (\mathcal{O}(M),g(t)) .$
Moreover:
 $$  A(t,U) =  - \frac12 \partial_{1} G(t, U),$$ 
where  $(\partial_1 G(t, U))_{i,j} = \left\langle U e_i , U e_j \right\rangle_{\partial_{t} g(t)}$.
\end{proposition}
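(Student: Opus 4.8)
The strategy is to use the standard frame-bundle approach to diffusions, but now with the extra vertical drift term that must be tuned so that the frame stays $g(t)$-orthonormal. First I would recall that, since $A$ is locally Lipschitz and the horizontal vector fields $L_i(t,\cdot)$ and the vertical fields $V_{\alpha,\beta}$ are smooth, the Stratonovich SDE \eqref{eds-U} has a unique (maximal) strong solution $U_t$ for any choice of $A$; this handles the existence/uniqueness of $U$ once $A$ is fixed. The real content is to identify the condition on $A$ equivalent to $U_t \in (\mathcal O(M), g(t))$ for all $t$, and to show there is exactly one symmetric such $A$, given by the stated formula.

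**Key computation.** The natural device is to track the matrix-valued process
$$ G(t,U_t)_{i,j} := \langle U_t e_i, U_t e_j\rangle_{g(t)}, $$
i.e. the Gram matrix of the moving frame measured in the current metric; $U_t$ is $g(t)$-orthonormal exactly when $G(t,U_t) \equiv \Id$. I would compute $*dG(t,U_t)$ by Stratonovich calculus, writing $G(t,U_t) = G_0(t) + G_1(U_t)$ conceptually — the $t$-dependence of the metric contributing $\partial_1 G(t,U_t)\,dt$, and the motion of $U_t$ in the fibre/base contributing the rest. The horizontal part $\sum_i L_i(t,U_t)*dW^i$ is $\nabla^t$-horizontal, hence it preserves $g(t)$-inner products of the frame's columns (this is precisely what $\nabla^t$-parallel transport does), so it contributes nothing to the Stratonovich differential of the $g(t)$-Gram matrix — here one uses that $d(\text{horizontal lift})$ applied to $\langle\cdot,\cdot\rangle_{g(t)}$ at fixed $t$ vanishes. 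The vertical part $\sum_{\alpha,\beta} A_{\alpha,\beta}(t,U_t) V_{\alpha,\beta}(U_t)\,dt$ acts on the frame by right multiplication by $A$ infinitesimally, so it contributes $\big(A^{\mathsf T} G + G A\big)\,dt$ evaluated along the solution (modulo the precise convention for $V_{\alpha,\beta}$ via $l_u$). Collecting terms,
$$ *d\,G(t,U_t) = \partial_1 G(t,U_t)\,dt + \big(A(t,U_t)^{\mathsf T} G(t,U_t) + G(t,U_t) A(t,U_t)\big)\,dt . $$
Starting from $G(0,U_0) = \Id$, the solution stays at $\Id$ iff the bracket vanishes whenever $G = \Id$, i.e. iff $A(t,U_t) + A(t,U_t)^{\mathsf T} = -\partial_1 G(t,U_t)$. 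Among solutions of $A + A^{\mathsf T} = -\partial_1 G$ there is exactly one symmetric one, namely $A = -\tfrac12 \partial_1 G$, which is the asserted formula. To make "stays at $\Id$" rigorous I would invoke uniqueness for the (now linear, matrix-valued) ODE/SDE satisfied by $G(t,U_t)$: if $A = -\tfrac12\partial_1 G$ then $G(t,U_t) \equiv \Id$ solves it, and it is the only solution; conversely if $G \equiv \Id$ then differentiating forces the symmetric-part condition on $A$, and symmetry of $A$ pins it down uniquely.

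**Expected main obstacle.** The routine-looking but genuinely delicate step is the Stratonovich differentiation of $G(t,U_t)$ when both the metric and the frame move: one must justify that the horizontal flow contributes zero to $*d\langle U_t e_i, U_t e_j\rangle_{g(t)}$ despite $g$ changing in $t$ — the point being that the horizontal lift is taken with respect to $\nabla^t$ at the \emph{same} frozen time $t$, so the metric's time-variation enters only through the explicit $\partial_1 G$ term and not through a cross-term with $dW$. Concretely I would fix a time $t$, note $L_i(t,\cdot)$ is $\nabla^t$-horizontal, recall that for a $\nabla^t$-horizontal vector field $\mathcal L_{L_i(t,\cdot)}\langle Ue_j, Ue_k\rangle_{g(t)} = 0$ on $\mathcal F(M)$, and only then let $t$ vary, picking up $\partial_1 G$. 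A secondary point to get right is the sign/transpose bookkeeping for how $V_{\alpha,\beta}$, defined through left multiplication $l_u$, acts on the columns $Ue_i$ of the frame — this determines whether the relevant condition is on $A + A^{\mathsf T}$ (it is), and hence that the symmetric choice is $-\tfrac12\partial_1 G$ rather than something twisted. Once these are in place, the existence of the SDE solution, the uniqueness of the symmetric $A$, and the explicit formula all follow immediately.
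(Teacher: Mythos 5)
Your proposal is correct and follows essentially the same route as the paper: both track the Gram matrix $G(t,U_t)_{i,j}=\langle U_te_i,U_te_j\rangle_{g(t)}$, observe that the horizontal part contributes nothing (the paper packages this via the transfer principle and the covariant Stratonovich differential $D^{S,t}$, you argue it directly from metricity of $\nabla^t$), that the vertical drift contributes $A^{\mathsf T}G+GA$, and then solve $A+A^{\mathsf T}=-\partial_1 G$ with $G=\Id$ under the symmetry constraint. The converse direction via uniqueness for the resulting pathwise linear ODE for $G(t,U_t)$ is also how the paper closes the argument.
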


\begin{proof}
Let us begin with curves. Let $I$ be a real interval, $\pi:TM \rightarrow M$ the projection, $V$ and $C$ in  $C^1(I , TM)$, two curves  such that $$x(t) := \pi(V(t))=\pi(C(t))     \text{, for all } t \in I $$
We want to compute:
$$ \frac{d}{dt}_{\mid_{t=0}}  \Big(  \left\langle V(t) , C(t) \right\rangle_{g(t,x(t))} \Big) $$

We write $ \partial_1 g(t,x)$ for $  \partial_s g(s,x)$ evaluated at $t$. Let us express the metric $g(t)$ in a coordinate system;
without loss of generality we can differentiate at time $0$. Let $(x^{1},...,x^{n})$ be a coordinate system at the point $x(0)$, in which we have:
$$V(t) = v^{i}(t) \partial_{x^{i}} $$
$$ C(t) = c^{i}(t) \partial_{x^{i}} $$
$$ g(t,x(t)) = g_{i,j}(t,x(t)) dx^{i}\otimes dx^{j} $$

In these local coordinates we get:
$$\begin{array}{lcl}
\displaystyle\frac{d}{dt}_{\mid_{t=0}}    \left\langle V(t) , C(t) \right\rangle_{g(t,x(t))} &=& 
\displaystyle\frac{d}{dt}_{\mid_{t=0}} g_{i,j}(t,x(t)) v^{i}(t)c^{j}(t)  \\
&=& (\partial_1 g_{i,j}(0,x)v^{i}(0)c^{j}(0) + \displaystyle\frac{d}{dt}_{\mid_{t=0}} (g_{i,j}(0,x(t)) v^{i}(t)c^{j}(t)) \\
&=& \partial_1 g_{i,j}(0,x)v^{i}(0)c^{j}(0) + \left\langle \nabla^{0}_{\dot x(0)}V(0) , C(0) \right\rangle_{g(0,x(0))} \\
& & + \left\langle V(0) , \nabla^{0}_{\dot x(0)}C(0) \right\rangle_{g(0,x(0))}\\
&=& \left\langle V(0),C(0) \right\rangle_{\partial_1 g(0,x(0)}  + \left\langle \nabla^{0}_{\dot x(0)}V(0) , C(0) \right\rangle_{g(0,x(0))}\\
& & + \left\langle V(0) , \nabla^{0}_{\dot x(0)}C(0) \right\rangle_{g(0,x(0))}.
\end{array}$$
 
In order to compute the $g(t)$ norm of a tangent valued process we will use what Malliavin calls ``the transfer principle'', as explained in \cite{Emery-transfer},\cite{Emery}.

Recall the equivalence between a given connection on a manifold $M$ and a splitting on $TTM$, i.e. $TTM = H^{\nabla}TTM \oplus VTTM $ \cite{Kob-Nom}. We have a bijection:
 $$\begin{array}{rcl}
  \mathcal{V}_{v} :T_{\pi(v)}M &\longrightarrow& V_{v}TTM \\
  u &\longmapsto& \frac{d}{dt} (v+tu)|_{t=0}.
\end{array}$$
For $  X,Y \in \Gamma(TM)$ we have:
$$ \nabla_{X}Y (x) =  \mathcal{V}_{X(x)}^{-1}((dY(x)(X(x)))^{v}),$$
where $ (.)^{v}$ is the projection of a vector in  $TTM$ onto the vertical subspace $VTTM $ parallely to $H^{\nabla}TTM $.

For a $ T(M)$-valued process $T_{t}$, we define:
 \begin{equation}\label{cov_mart}
    D^{S,t}T_t  = ({\cal V} _{T_t })^{-1}((*dT_t )^{v,t}),
 \end{equation}
where $ (.)^{v,t}$ is defined as before but for the connection $\nabla^{t} $. The above generalization makes sense for a tangent valued process coming from a Stratonovich equation like $U_t e_i $, where $U_{t}$ is a solution of the Stratonovich differential equation (\ref{eds-U}).
 
For the solution $U_{t}$ of (\ref{eds-U}) we get
 \begin{eqnarray}\label{dst-equation}
  d \left(  \left\langle U_t e_i , U_t e_j  \right\rangle_{g(t,\pi (U_t))} \right) &=&\left\langle U_t e_i , U_t e_j \right\rangle_{ \partial_1 g(t,\pi (U_t)))}\,dt\\ 
   & & + \left\langle D^{S,t}U_t e_i , U_t e_j \right\rangle_{g(t,\pi (U_t))} + \left\langle   U_t e_i ,D^{S,t} U_t e_j \right\rangle_{g(t,\pi (U_t))}
 \end{eqnarray}
We would like to find a symmetric  $A$ such that the left hand side of the above equation vanishes for all time (i.e. $U_{t} \in (\mathcal{O}(M),g(t))$).
Denote by
$ \ev_{e_i} : \mathcal{F}(M) \to TM $ the ordinary evaluation, and  
$d\ev_{e_i} : T\mathcal{F}(M) \to TTM$ its differential.\\
 It is easy to see that $d\ev_{e_i}$ sends $VT\mathcal{F}(M)$ to $VTTM$ and sends $H^{\nabla^h}T\mathcal{F}(M)$ to $H^{\nabla} TTM$.
 We obtain:
\begin{eqnarray} 
D^{S,t}U_t e_i 
                &=& \sum_{\alpha = 1}^n A_{\alpha ,i}(t,U_t)U_te_{\alpha}\,dt. \label{Ddef}
\end{eqnarray} 
For simplicity, we take for notation:
 $(\partial_1 G(t, U))_{i,j} = \left\langle U e_i , U e_j \right\rangle_{\partial_{t} g(t)}$ and  
$$(G(t, U))_{i,j} = \left\langle U e_i , U e_j \right\rangle_{g(t)}.$$
 It is now easy to find the condition for $A$:
\begin{equation}
 (G(t,U_t)A(t,U_t))_{j,i} + (G(t,U_t)A(t,U_t))_{i,j}=-(\partial_1 G(t, U_t))_{i,j} \label{eq01}
\end{equation}
Given orthogonality $G(t,U_{t})=\Id $ and so by (\ref{eq01}) $A$ differs from $ - \frac12 \partial_1 G $ by skew symmetric matrice, therefore will be equal to it if we demand symmetry.
Conversely if $ A =  - \frac12 \partial_1 G $ then by (\ref{dst-equation}) and equation (\ref{cov_mart}) we see $G(t,U_{t})=\Id $.

%
\end{proof}

\begin{remark}
The SDE in proposition \ref{EDS_1} does not explode because on any compact time interval all coefficients and their derivatives up to order 2 in space and order~1 in time are bounded.
\end{remark}

\begin{remark}
The condition of symmetry is linked to a good definition of parallel transport with moving metrics in some sense.
\end{remark}

To see where the condition of symmetry comes from we may observe what happens in the constant metric case. It is easy to see that the usual definition of parallel transport along a semi-martingale which depends on the vanishing of the Stratonovich integral of connection form, is equivalent to isometry and the symmetry condition for the drift in the following SDE in $\mathcal{F}(M)$:
$$\left\{ \begin{array}{l}
  d\tilde{U}_t = \sum_{i=1}^d L_i(\tilde{U}_t)*dW^i +A( \tilde{U}_t)_{\alpha , \beta}   V_{\alpha , \beta}(\tilde{U}_t) \,dt \\
  \tilde{U}_0 \in (\mathcal{O}(M),g)\\
   \quad \tilde{U}_t \in (\mathcal{O}(M),g) \quad \text{(isometry)}\\
  \quad A( . , . )_{\alpha , \beta} \in S(n) \quad \text {(vertical evolution)}.
\end{array}
\right .$$
\begin{remark}
 Isometry of $U_{t}$ forces $A$ to be skeew symmetric by (\ref{eq01}), the symmetry of $A$ give $A=0$. We get the usual stochastic differential equation of the parallel transport in constante metric case.
\end{remark}

 The next proposition is a direct adaptation of a proposition  in \cite{Hsu}, page 42; hence the 
proof is omitted.
 
 \begin{proposition}
 Let $\alpha \in \Gamma(T^{*}M)$ and $F_{\alpha} : {\cal F}(M) \to \ \R^{d}$,   
$F_{\alpha}^{i}(u) = \alpha_{\pi(u)}(ue_i)$ its scalarization.
Then, for all $A \in \Gamma(TM)$, 
$$(\nabla_{A} \alpha)_{\pi (u)}(ue_i) = h(A_{\pi(u)}) F_{\alpha}^{i} .$$ 
 \end{proposition}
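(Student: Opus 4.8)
The statement relates the covariant derivative of a 1-form $\alpha$ to the action of the horizontal lift $h(A_{\pi(u)})$ on the scalarization $F_\alpha$. The plan is to work entirely on the frame bundle $\mathcal{F}(M)$, exploiting the fundamental relation between the connection on $M$ and the horizontal distribution on $\mathcal{F}(M)$ that was set up earlier in the section (namely the identification $TTM = H^\nabla TTM \oplus VTTM$ and, on the frame bundle, $T\mathcal{F}(M) = H^{\nabla^h}T\mathcal{F}(M)\oplus VT\mathcal{F}(M)$). Fix $u\in\mathcal{F}(M)$ with $\pi(u)=x$ and a vector field $A\in\Gamma(TM)$. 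Let $t\mapsto u_t$ be the horizontal curve in $\mathcal{F}(M)$ with $u_0=u$ and $\dot u_0 = h(A_x)$, so that $x_t:=\pi(u_t)$ satisfies $\dot x_0 = A_x$ and $u_t$ is the $\nabla$-parallel transport of frames along $x_t$.

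The key computation is then just the chain rule: $h(A_x)F_\alpha^i = \frac{d}{dt}\big|_{t=0} F_\alpha^i(u_t) = \frac{d}{dt}\big|_{t=0}\alpha_{x_t}(u_t e_i)$. Because $u_t e_i$ is by construction the parallel transport of $u e_i =: X_i(x)$ along $x_t$, we have $\nabla^0_{\dot x_0}(u_\bullet e_i) = 0$, i.e. $\frac{D}{dt}\big|_{t=0}(u_t e_i)=0$. Hence differentiating the pairing of the 1-form with the parallel vector field isolates exactly the covariant derivative term: $\frac{d}{dt}\big|_{t=0}\alpha_{x_t}(u_t e_i) = (\nabla_{A_x}\alpha)(u e_i)$, which is the claimed identity. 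This is the standard "differentiate along a horizontal curve and use that the associated vector field is parallel" argument, identical in spirit to the curve computation carried out in the proof of Proposition \ref{EDS_1}.

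The only point that needs a little care — and the one I would flag as the main (very mild) obstacle — is justifying that the horizontal vector field $h(A)$ on $\mathcal{F}(M)$ is $\pi$-related to $A$ and that its flow lines project to curves with the right initial velocity, together with the compatibility $d\pi\circ h = \mathrm{id}$ on horizontal vectors; these are exactly the frame-bundle facts used implicitly via the maps $\ev_{e_i}$ and $d\ev_{e_i}$ earlier in the section. Since all of this is textbook material (it is literally Proposition on p.~42 of \cite{Hsu}), no genuine difficulty arises, and as the excerpt already announces, the proof is omitted.
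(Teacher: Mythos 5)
Your proposal is correct and is exactly the standard horizontal-lift argument (differentiate $\alpha_{x_t}(u_te_i)$ along the projection of an integral curve of $h(A)$, using that $u_te_i$ is parallel so only the $\nabla_{A}\alpha$ term survives in the Leibniz rule); this is precisely the proof from \cite{Hsu}, p.~42, that the paper explicitly defers to and omits. No gap.
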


Consequently, for all $u \in {\cal F}(M) $,
$$ (\nabla_{A}^{g(t)} df)_{\pi (u)}(u e_i) = h^{g(t)}(A_{\pi(u)}) F_{df}^{i}   $$
and for $f \in C^{\infty}(M)$,
\begin{eqnarray*}
   L_{i}(t)(f \circ \pi)(u) &=& d(f \circ \pi) L_{i}(t,u)\\
   &=&  F_{df}^{i}(u).
\end{eqnarray*}
Hence we have the formula: 
\begin{eqnarray*}
L_{i}(t)L_{j}(t)(f \circ \pi)(u)
&=& h^{g(t)}(u e_i) F_{df}^{j} \\
&=& (\nabla_{u e_i}^{g(t)} df)(u e_j)  \\
&=& \nabla^{g(t)}df(u e_i , u e_j).
\end{eqnarray*}

\begin{proposition}\label{mbg(t)}
Take $x \in M$ and the SDE in $\mathcal{F}(M)$:
\begin{equation}
\left\{ \begin{array}{l} \label{eq-relevemant}
  *dU_t = \sum_{i=1}^n L_i(t,U_t)*dW^i -\frac12 \partial_{1}G(t,U_t)_{\alpha , \beta}  V_{\alpha , \beta}(U_t) \,dt \\
  U_0 \in \mathcal{F}(M) \text{ such that } U_0 \in (\mathcal{O}_{x}(M),g(0)) .
\end{array}
\right .\end{equation}
 Then $X_{t}(x)= \pi(U_{t}) $ is a $g(t)$-Brownian motion, which we note $g(t)$-BM(x).
\end{proposition}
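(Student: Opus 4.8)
Let me think about this. We have the SDE (3.5) in the frame bundle $\mathcal{F}(M)$ with the drift term $-\frac12 \partial_1 G(t, U_t)$ which is the specific choice of $A$ from Proposition \ref{EDS_1}. So by Proposition \ref{EDS_1}, $U_t \in (\mathcal{O}(M), g(t))$ for all $t$ — the frame stays $g(t)$-orthonormal.

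We want to show $X_t(x) = \pi(U_t)$ is a $g(t)$-Brownian motion, i.e., for every smooth $f$,
$$f(X_s(x)) - f(x) - \frac12\int_0^s \Delta_t f(X_t(x))\,dt$$
is a local martingale.

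The approach: Apply Itô's formula (in Stratonovich form) to $f \circ \pi$ evaluated along $U_t$. Since $\pi(U_t) = X_t(x)$, we have $f(X_t(x)) = (f\circ\pi)(U_t)$.

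Write the Stratonovich SDE:
$$*d(f\circ\pi)(U_t) = \sum_i L_i(t)(f\circ\pi)(U_t) *dW^i - \frac12 \sum_{\alpha,\beta}\partial_1 G(t,U_t)_{\alpha,\beta} V_{\alpha,\beta}(f\circ\pi)(U_t)\,dt.$$

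Now the key observations:
1. $V_{\alpha,\beta}(f\circ\pi) = 0$ because vertical vector fields are tangent to the fibers of $\pi$, so $d\pi$ kills them. Hence the drift term from $A$ contributes nothing.
2. Converting Stratonovich to Itô: $*dW^i$ terms generate an Itô correction $\frac12 \sum_i d[L_i(t)(f\circ\pi)(U_\cdot), W^i]_t$. Since $L_i(t)(f\circ\pi)(U_t) = F_{df}^i(U_t)$ (from the computation just before the proposition), we need the quadratic covariation, which comes from the martingale part of $F_{df}^i(U_t)$. The martingale part of $*dU_t$ is $\sum_j L_j(t,U_t)*dW^j$, and $L_j(t)$ applied to $F_{df}^i = L_i(t)(f\circ\pi)$ gives $L_j(t)L_i(t)(f\circ\pi)(U_t) = \nabla^{g(t)}df(U_te_j, U_te_i)$.
3. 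So the Itô correction is $\frac12 \sum_i \sum_j \nabla^{g(t)}df(U_te_j, U_te_i)\,d[W^j, W^i]_t = \frac12 \sum_i \nabla^{g(t)}df(U_te_i, U_te_i)\,dt$ (using $d[W^j,W^i]_t = \delta_{ij}\,dt$).
4. Since $U_t$ is $g(t)$-orthonormal, $(U_te_i)_i$ is a $g(t)$-orthonormal basis of $T_{X_t}M$, so $\sum_i \nabla^{g(t)}df(U_te_i, U_te_i) = \Tr_{g(t)}(\nabla^{g(t)}df) = \Delta_t f(X_t)$.

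Putting it together: $f(X_t(x)) = f(x) + \sum_i \int_0^t F_{df}^i(U_s)\,dW^i + \frac12\int_0^t \Delta_s f(X_s(x))\,ds$, and the stochastic integral is a local martingale. Done.

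The main obstacle — really the only subtle point — is keeping careful track of the Stratonovich-to-Itô conversion and verifying that the $A$-drift (vertical) genuinely contributes zero after projection; everything else is the standard Eells–Elworthy–Malliavin development argument transplanted to the time-dependent setting, where the only new feature is that the relevant "horizontal Laplacian" at time $t$ is built from $\nabla^{g(t)}$, which is exactly why we needed $U_t$ to be $g(t)$-orthonormal (Proposition \ref{EDS_1}) for the trace to produce $\Delta_t$ rather than some other second-order operator.
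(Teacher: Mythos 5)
Your argument is correct and follows essentially the same route as the paper: project the Stratonovich SDE through $f\circ\pi$, note that the vertical drift is killed by $d\pi$, convert to It\^{o} form using $L_jL_i(f\circ\pi)=\nabla^{g(t)}df(U_te_j,U_te_i)$, and use the $g(t)$-orthonormality of $U_t$ from Proposition \ref{EDS_1} to identify the trace with $\Delta_t f$. The only difference is that you make explicit the vanishing of the vertical contribution and the quadratic-covariation bookkeeping, which the paper leaves implicit.
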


\begin{proof}
For $f \in C^{\infty}(M)$,
$$
\begin{array}{lcl}
  d(f \circ \pi \circ U_{t}) &=& \sum_{i=1}^{n} L_{i}(t)(f \circ \pi)(U_{t})*dW^{i} \\
   &=& \sum_{i=1}^{n} L_{i}(t)(f \circ \pi)(U_{t})dW^{i} + \frac12 \sum_{i,j=1}^{n} L_{i}(t)L_{j}(t)(f \circ \pi)dW^{i}dW^{j} \\
  &\overset{d \cal M }{\equiv} & \frac12 \sum_{i=1}^{n}\nabla^{g(t)}df(U_t e_i , U_t e_i)\,dt\\
  &\overset{d \cal M }{\equiv} & \frac12 \D_{t}f(\pi \circ U_{t}) \,dt.
\end{array}$$
The last equality comes from the fact that $U_{t} \in (\mathcal{O}(M),g(t)) $.
\end{proof}

\begin{remark}
Recall that in the compact case the lifetime of equation (\ref{eq-relevemant}) is deterministic and the same as the lifetime of the metrics family. 
\end{remark}

Let $U_{t}$ be the solution of (\ref{eq-relevemant}). 
We will write $\tpar_{0,t} =U_{t} \circ U_{0}^{-1} $ the $g(t)$ parallel transport over a $g(t)$-Brownian motion ( we call it parallel transport because it is a natural extention of the usual parallel transport in the constante metric case). As usual it is an isometry: 
$$\tpar_{0,t} : (T_{X_{0}}M, g(0)) \to  (T_{X_{t}}M, g(t)). $$
We also get a development formula. Take an orthonormal basis $(v_{1},...,v_{n})$ of $(T_{X_{0}}M, g(0))$, and $X_{t}(x)$ a $g(t)$-Brownian motion of proposition \ref{mbg(t)}; then
$$ *dX_{t}(x) = \tpar_{0,t}v_{i} *dW^{i}_{t}. $$
For $f \in C^{2}(M)$ we get the It\^{o} formula:
\begin{equation} \label{g(t)-ito}
 df(X_{t}(x)) = \langle \nabla^{t} f ,\tpar_{0,t}v_{i} \rangle_{t}dW^{i} + \frac12 \D_{t}(f)(X_{t}(x)) \,dt .
\end{equation}

We will now give examples of $g(t)$-Brownian motion.
Let $(S^{n}, g(0))$ be a sphere and the solution of the Ricci flow: $\frac{\partial}{\partial_{t}}g(t) = - 2 \Ric_{t} $ that is $g(t)=(1-2(n-1)t)g(0) $ with explosion time $T_{c} = \frac{1}{2(n-1)} $. 
We will use the fact that all metrics are conformal to the initial metric to express the $g(t)$-Brownian motion in terms of the $g(0)$-Brownian motion.
Let $ f \in C^{2}(S^{n})$, $X_{t}(x)$ be a $g(t)$-Brownian motion starting at $x \in  S^{n} $, $B_{t}$ some real-valued Brownian motion, and $\mathbb{B}_{t}(x)$ a $S^{n}$ valued $g(0)$-Brownian motion. Then:
$$ df(X_{t}(x)) =  \parallel \nabla^{t} f (X_{t}(x))\parallel_{g(t)} dB_{t} + \frac12 \left(\frac{1}{1-2(n-1)t}\right) \D_{0}f(X_{t}(x)) \,dt.$$
We have:
$$\parallel \nabla^{t} f \parallel_{g(t)}^{2} = \frac{1}{1-2(n-1)t} \parallel \nabla^{0} f \parallel_{0}^{2}. $$
Let 
 $$\tau(t) = \int_{0}^{t} \frac{1}{1-2(n-1)s} \,ds ,$$ 
  then $$\tau(t)= \frac{\ln(1-2(n-1)t)}{-2(n-1)}  ,\quad \tau^{-1}(t)= \frac{e^{-2(n-1)t}-1}{-2(n-1)} .$$
We have the equality in law:
$$(X_{.}(x)) \overset{\mathcal{L}}{=} (\mathbb{B}_{\tau(.)}(x)) . $$

We have a similar result for the hyperbolic case:
Let $ (H^{n}(-1),g(0)) $ be the hyperbolic space with constant curvature $-1$. Then $g(t)=(1+2(n-1)t)g(0)$ is the solution of the Ricci flow. Let $X_{t}(x)$ be a $g(t)$-Brownian motion starting at $x \in  S^{n} $,and $\mathbb{B}_{t}(x)$ an $H^{n}$-valued $g(0)$-Brownian motion. Then:
$$\tau(t) = \int_{0}^{t} \frac{1}{1+2(n-1)s} \,ds, $$ 
and in law:
$$ (X_{.}(x) )\overset{\mathcal{L}}{=}( \mathbb{B}_{\tau(.)}(x)) .$$

Let us look at what happens for some limit of the Ricci flow, the so called Hamilton cigar manifold (\cite{chowknopf}).
Let on $ \R^{2}$, $g(0,x)= \frac{1}{1+\parallel x \parallel^{2}}\, g_{\rm can} $ be the Hamilton cigar , where $\parallel . \parallel $ is the Euclidean norm. Then the solution to the Ricci flow is given by $ g(t,x) =\frac{1+\parallel x\parallel^{2}}{e^{4t}+\parallel x\parallel^{2}}g(0,x)  $. Let $ f \in C^{2}(\R^{2})$, $X_{t}(x)$ be a $g(t)$-Brownian motion starting at $x \in \R^{2}  $, $B_{t}$ a real-valued Brownian motion, and $\mathbb{B}_{t}(x)$ some $\R^{2}$ valued $g(0)$-Brownian motion. Then:
$$ df(X_{t}(x)) =  \parallel \nabla^{t} f (X_{t}(x))\parallel_{g(t)} dB_{t} + \frac12  \frac{e^{4t}+\parallel X_{t}(x) \parallel^{2}}{1+\parallel X_{t}(x)\parallel^{2}} \D_{0}f(X_{t}(x)) \,dt.$$
We have:
$$ \nabla^{t} f(x)  = \frac{e^{4t} + \parallel x \parallel^{2}}{1 + \parallel x \parallel^{2}} \nabla^{0} f (x) ,$$ 
$$ \parallel \nabla^{t} f(x) \parallel_{t}^{2} = \frac{e^{4t} + \parallel x \parallel^{2}}{1 + \parallel x \parallel^{2}} \parallel \nabla^{0} f (x)\parallel_{0}^{2}, $$
$$\D_{t}f= \frac{e^{4t} + \parallel x \parallel^{2}}{1 + \parallel x \parallel^{2}} \D_{0}f. $$
We set:
$$\tau(t) = \int_{0}^{t} \frac{e^{4s} + \parallel X_{s}(x) \parallel^{2}}{ 1 + \parallel X_{s}(x) \parallel^{2}} \,ds .$$ 
Then in law:
$$(X_{.}(x)) \overset{\mathcal{L}}{=} ( \mathbb{B}_{\tau(.)}(x) ) $$

\begin{remark}
If $X_{t}(x)$ is a $g(t)$-Brownian motion associated to a Ricci flow started at $g(0) $ then $X_{{t}/{c}}(x)$ is a $cg({t}/{c})$-Brownian motion associated to a Ricci flow started at $cg(0) $ so it is compatible with the blow up.
\end{remark}

\section{Local expression, evolution equation for the density, conjugate heat equation}

We begin this section by expressing a $g(t)$-Brownian motion in local coordinates.

\begin{proposition} \label{eq-local}
 Let $x\in M$, $(x^{1},...,x^{n})$ be local coordinates around $x$, and $X_{t}(x)$ a $g(t)$-Brownian motion.
Before the exit time of the domain of coordinates, we have: 
$$ dX^{i}_{t}(x) = \sqrt{g^{-1}(t,X_{t}(x))_{i,j}} dB^{j} - \frac12 g^{k,l} \Gamma^{i}_{kl}(t,X_{t}(x)) \,dt $$
where we denote $ \sqrt{g^{-1}(t,X_{t}(x))_{i,j}} $ the unique positive square root of the inverse to the matrix $(g(t, \partial_{x^{i}},\partial_{x^{j}}))_{i,j} $; here $\Gamma^{i}_{kl}(t,X_{t}(x))$ are the Christoffel symbols associated to $\nabla^{g(t)} $, and $B^{i}$ are  $ n$ independent Brownian motion. 
\end{proposition}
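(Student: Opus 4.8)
\emph{Proof proposal.} The plan is to apply the It\^o formula (\ref{g(t)-ito}) to the coordinate functions, and then to identify the drift by a Laplace--Beltrami computation in coordinates and the martingale part by a L\'evy characterization argument. Since (\ref{g(t)-ito}) is stated for globally defined $f\in C^2(M)$, I would first fix, for each $i$, a function $f_i\in C^\infty(M)$ coinciding with $x^i$ on a neighbourhood of $x$; stopping the process at the exit time of a small coordinate ball, both sides of all the identities below depend only on the germs of these functions along the trajectory, so one may argue locally up to the exit time of the coordinate domain. Writing $X^i_t=f_i(X_t(x))$ and applying (\ref{g(t)-ito}),
$$ dX^i_t=\langle\nabla^t f_i,\tpar_{0,t}v_j\rangle_t\,dW^j+\tfrac12\Delta_t(f_i)(X_t(x))\,dt, $$
so it remains to treat the two terms on the right.

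For the drift, I would use the local expression of the Laplace--Beltrami operator, $\Delta_t h=g^{kl}(t,\cdot)\big(\partial_k\partial_l h-\Gamma^m_{kl}(t,\cdot)\,\partial_m h\big)$; applied to $h=x^i$ this gives $\Delta_t x^i=-g^{kl}\Gamma^i_{kl}(t,\cdot)$, hence the announced drift $-\tfrac12 g^{kl}\Gamma^i_{kl}(t,X_t(x))\,dt$. For the martingale part $M^i_t:=\int_0^t\langle\nabla^s f_i,\tpar_{0,s}v_j\rangle_s\,dW^j$, I would compute its bracket via the carr\'e du champ: since $X_t(x)$ has generator $\tfrac12\Delta_t$ and, for smooth $h,k$, $\Delta_t(hk)-h\Delta_t k-k\Delta_t h=2\langle\nabla^t h,\nabla^t k\rangle_t=2g^{pq}\partial_p h\,\partial_q k$, the choice $h=x^i$, $k=x^j$ yields $d\langle M^i,M^j\rangle_t=g^{ij}(t,X_t(x))\,dt=(g^{-1}(t,X_t(x)))_{ij}\,dt$; this can also be read off (\ref{g(t)-ito}) directly by polarization.

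Finally, to bring the martingale part into the stated form I would let $\sqrt{g}(t,\cdot)$ be the positive symmetric square root of $(g(t,\partial_{x^i},\partial_{x^j}))_{ij}$, so that $\sqrt{g^{-1}}=(\sqrt{g})^{-1}$, and set $B^j_t:=\int_0^t(\sqrt{g}(s,X_s(x)))_{jk}\,dM^k_s$. From $d\langle M^k,M^l\rangle_t=(g^{-1})_{kl}\,dt$ and the identity $\sqrt{g}\,g^{-1}\sqrt{g}=\Id$ one gets $d\langle B^j,B^l\rangle_t=\delta^{jl}\,dt$, so by L\'evy's theorem $B=(B^1,\dots,B^n)$ is an $\R^n$-valued Brownian motion; and since $\sqrt{g^{-1}}\,\sqrt{g}=\Id$ we recover $\int_0^t(\sqrt{g^{-1}}(s,X_s(x)))_{ij}\,dB^j_s=M^i_t$, which is exactly the claimed representation. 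All stochastic integrals are well defined because $g^{-1}(t,\cdot)$ is smooth, symmetric and positive definite, so its square root is smooth as well; note that the invertibility of $g$ is precisely what lets us recover $B$ on the original probability space without any enlargement. The only genuinely technical point is the localization to the chart, handled by the stopping argument above, so I do not expect a serious obstacle.
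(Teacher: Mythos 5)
Your proposal is correct and follows essentially the same route as the paper: apply the It\^o formula \eqref{g(t)-ito} to the coordinate functions, compute the drift from the coordinate expression $\D_t x^i=-g^{kl}\Gamma^i_{kl}$, and produce the Brownian motion $B$ from the positive square root of $g^{-1}$ via L\'evy's theorem. The only (harmless) variations are that you verify the bracket of the martingale part through the carr\'e du champ of the generator rather than directly from the isometry of $\tpar_{0,t}$, and that you make explicit the localization to the chart, which the paper leaves implicit.
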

\begin{proof}
From the It\^o equation \ref{g(t)-ito}, we get:
$$ dX^{i}_{t}(x) = \langle \nabla^{t} x^{i}  ,  \tpar_{0,t} v_{l}   \rangle_{g(t)}  dW^{l} + \frac12 \D_{t} x^{i} (X_{t}(x)) dt,$$
where $(v_{1},...,v_{n})$ is a $g(0)$-orthogonal basis of $T_{x}M$. By the usual expression of the Laplacian in coordinates:
$$ \D_{t} x^{i} (X_{t}(x)) = -g^{l,k} \Gamma_{kl}^{i}(t, X_{t}(x)), $$ 
and the gradient expression of the coordinates functions: 
$$\nabla^{t}x_{i} = g(t)^{i,j} \frac{\partial }{\partial x_{j}}, $$
we have:
\begin{eqnarray*}
 dX^{i}_{t}(x) &=& g(t)^{i,j} \langle \frac{\partial }{\partial x_{j}} ,  \tpar_{0,t} v_{l}   \rangle_{g(t)}  dW^{l} - \frac12 g^{l,k} \Gamma_{kl}^{i}(t, X_{t}(x)) \,dt \\
 &=& \sum_{m} \sqrt{g(t)^{i,m}} \langle \sqrt{g(t)^{m,j}} \frac{\partial }{\partial x_{j}} ,  \tpar_{0,t} v_{l}   \rangle_{g(t)}  dW^{l} - \frac12 g^{l,k} \Gamma_{kl}^{i}(t, X_{t}(x)) \,dt \\
&=& \sqrt{g(t)^{i,m}} dB^{m}   - \frac12 g^{l,k} \Gamma_{kl}^{i}(t, X_{t}(x)) \,dt  \, ,\\
\end{eqnarray*}
where $ dB^{m} = \langle \sqrt{g(t)^{m,j}} \frac{\partial }{\partial x_{j}} ,  \tpar_{0,t} v_{l}   \rangle_{g(t)}  dW^{l} $.
By the isometry property of the parallel transport and L\'evy's theorem $B=(B^{1},...,B^{n})$ is a Brownian motion in $\R^{n} $.

\end{proof}

\begin{remark}
 The above equation is similar to the equation in the fixed metric case. 
\end{remark}

Now we shall study the evolution equation for the density of the law of the $g(t)$-Brownian motion. Let $X_{t}(x)$ be a $g(t)$-BM$(x)$, and $d\mu_{t}$ the Lebesgue measure over $(M,g(t))$. 
$X_{t}(x)$ is a diffusion with generator $\D_{t}$, we have the smoothness of the density (e.g. \cite{ Stroock}).
Let  $ h^{x}(t,y) \in C^{\infty}(]0,T[ \times M )$ such that:
$$\left\{\begin{array}{lcl}
 X_{t}(x) &\overset{\mathcal{L}}{=}& h^{x}(t,y) d\mu_{t}(y), t>0 \\
 X_{0}(x) &\overset{\mathcal{L}}{=}& \delta_{x}.
 \end{array}\right .$$
By the continuity of $X_{t}(x)$ and the dominated convergence theorem we get the convergence in law:
 $$  \overset{\mathcal{L}}{\lim_{t \to 0}} \quad   X_{t}(x) = \delta_{x} .$$
We write in a local chart the expression of $d\mu_{t}$ in terms of $d\mu_{0}$, i.e.,
$$d\mu_{t} = \frac{\sqrt{\det(g_{i,j}(t))}} {\sqrt{\det(g_{i,j}(0))}}  \sqrt{\det(g_{i,j}(0))} |dx^{1} \wedge dx^{2} \wedge ...  \wedge dx^{n}|$$
and we note:
$$ \mu_{t}(dy)= \psi(t,y) \mu_{0}(dy) .$$
\begin{proposition}
 $$\left\{\begin{array}{l}
  \displaystyle{ 
  \frac{d}{dt}(h^{x}(t,y)) +  h^{x}(t,y) \Tr \bigg{(} \frac12 (g^{-1}(t,y)) \frac{d}{dt} g(t,y) \bigg{)} = \frac12 \D_{g(t)}h^{x}(t,y)}\\
  \displaystyle{ \overset{\mathcal{L}}{\lim_{t \to 0}} \quad h^{x}(t,y) d\mu_{t} = \delta_{x}} .
   \end{array} \right . $$
\end{proposition}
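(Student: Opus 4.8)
The plan is to test the law of $X_t(x)$ against smooth functions, turn the defining local-martingale property into an evolution equation for the map $t\mapsto\E[f(X_t(x))]$, and then transfer the time derivative onto the density $h^x$ using the local identity $\mu_t(dy)=\psi(t,y)\mu_0(dy)$. Concretely, fix $f\in C^{\infty}(M)$. Since $M$ is compact, $f$ and $\D_t f$ are bounded uniformly for $t$ in compact subintervals of $[0,T[$, so the local martingale appearing in the definition of a $g(t)$-Brownian motion is a genuine martingale; taking expectations gives
$$\E[f(X_t(x))]=f(x)+\frac12\int_0^t\E[\D_s f(X_s(x))]\,ds,$$
and, since $s\mapsto\E[\D_s f(X_s(x))]$ is continuous (continuity of $X$, joint continuity of $(s,y)\mapsto\D_s f(y)$, compactness, dominated convergence), we obtain $\frac{d}{dt}\E[f(X_t(x))]=\frac12\E[\D_t f(X_t(x))]$ for $t>0$.

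Next I would rewrite both sides using the density. On the left, by definition of $h^x$ and the relation $\mu_t(dy)=\psi(t,y)\mu_0(dy)$,
$$\E[f(X_t(x))]=\int_M f(y)\,h^x(t,y)\,\psi(t,y)\,d\mu_0(y),$$
and differentiating under the integral sign --- legitimate on $]0,T[$ by the smoothness of $h^x$ and $\psi$ together with compactness of $M$ --- produces, after using Jacobi's formula $\partial_t\log\psi(t,y)=\partial_t\log\sqrt{\det g_{ij}(t,y)}=\frac12\Tr\big(g^{-1}(t,y)\partial_t g(t,y)\big)$ in a chart,
$$\frac{d}{dt}\E[f(X_t(x))]=\int_M f(y)\Big(\partial_t h^x(t,y)+h^x(t,y)\,\tfrac12\Tr\big(g^{-1}(t,y)\partial_t g(t,y)\big)\Big)\psi(t,y)\,d\mu_0(y).$$
On the right, $\E[\D_t f(X_t(x))]=\int_M \D_t f(y)\,h^x(t,y)\,d\mu_t(y)$, and since $(M,g(t))$ is closed the Laplace--Beltrami operator $\D_t$ is self-adjoint with respect to its own volume $d\mu_t$; Green's formula then moves it onto $h^x$, giving
$$\E[\D_t f(X_t(x))]=\int_M f(y)\,\D_t h^x(t,y)\,d\mu_t(y)=\int_M f(y)\,\D_{g(t)} h^x(t,y)\,\psi(t,y)\,d\mu_0(y).$$

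Equating the two expressions, letting $f\in C^{\infty}(M)$ vary, and using $\psi>0$, yields the stated PDE
$$\partial_t h^x(t,y)+h^x(t,y)\,\Tr\Big(\tfrac12 g^{-1}(t,y)\tfrac{d}{dt}g(t,y)\Big)=\tfrac12\D_{g(t)} h^x(t,y)$$
for every $t>0$, while the initial condition $\overset{\mathcal{L}}{\lim_{t\to 0}}h^x(t,y)\,d\mu_t=\delta_x$ is precisely the convergence in law $\overset{\mathcal{L}}{\lim_{t\to 0}}X_t(x)=\delta_x$ recorded just before the statement. I expect the only genuine points of care to be the upgrade from local martingale to martingale (handled by compactness of $M$ and boundedness of $f$ and $\D_t f$ on compact time intervals) and the differentiation under the integral sign together with the self-adjointness step; both are routine given the smoothness of the density already asserted for $t>0$.
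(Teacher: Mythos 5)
Your proof is correct and follows essentially the same route as the paper's: upgrade the defining local martingale to a true martingale by compactness, differentiate $\E[f(X_t(x))]$, move $\D_{g(t)}$ onto $h^x$ by Green's theorem, change measure via $\mu_t(dy)=\psi(t,y)\mu_0(dy)$, and use Jacobi's determinant formula for $\partial_t\psi$. The only cosmetic difference is that you expand the product rule and the trace identity inside the integral, while the paper first extracts the pointwise identity $\frac{d}{dt}(h^x\psi)=\frac12(\D_{g(t)}h^x)\psi$ and then differentiates the determinant.
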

\begin{proof}
 For $f \in C^{\infty}(M) $, $t>0$, by definition of $X_{t}(x)$ we have:
$$ \begin{array}{l}
 \E [f(X_{t}(x))] - f(x) = \frac12  \E \bigg{[} \int_{0}^{t} \D_{g(s)} f (X_{s}(x)) \,ds  \bigg{]} \\ 
 \frac{d}{dt} \E [f(X_{t}(x))] = \frac12 \E[\D_{g(t)} f (X_{t}(x)) ], 
\end{array}$$
i.e.:
$$ \begin{array}{lcl}
  \frac{d}{dt} \int_{M} h^{x}(t,y)f(y) \mu_{t}(dy) &=& \frac12 \int_{M} \D_{g(t)} f (y) h^{x}(t,y) \mu_{t}(dy)\\
                &=& \frac12 \int_{M}  f (y) \D_{g(t)}h^{x}(t,y) \mu_{t}(dy). 
   \end{array} $$
The last equality comes from Green's theorem and the compactness of the manifold.
By changing $ \mu_{t}(dy)= \psi(t,y) \mu_{0}(dy) $ in the left hand side, we have:
 $$ \int_{M} f(y) \frac{d}{dt}(h^{x}(t,y)\psi(t,y))\mu_{0}(dy) =  \frac12 \int_{M}  f (y) (\D_{g(t)}h^{x}(t,y)) \psi(t,y)) \mu_{0}(dy) $$
so:
\begin{equation}
 \label{dens} \frac{d}{dt}(h^{x}(t,y)\psi(t,y)) = \frac12 (\D_{g(t)}h^{x}(t,y)) \psi(t,y)) 
 \end{equation}
We also have by determinant differentiation:
$$\begin{array} {lcl}
\displaystyle\frac{d}{dt} \psi(t,y) &=& \displaystyle\frac{1}{2\sqrt{\det(g_{i,j}(0))}} \frac{1}{\sqrt{\det(g_{i,j}(t))}}
\det(g_{i,j}(t)) \, \Tr \bigg{(} g^{-1}(t,y) \frac{d}{dt} g(t,y) \bigg{)} \\
 &=& \displaystyle\frac12 \psi(t,y)\, tr \bigg{(}  g^{-1}(t,y) \frac{d}{dt} g(t,y) \bigg{)} .
\end{array} $$

The part $\Tr \bigg{(} \frac12\, g^{-1}(t,y) \frac{d}{dt} g(t,y) \bigg{)}$  is intrinsic, it does not depend on the choice of the chart. Hence (\ref{dens}) gives the following inhomogeneous reaction-diffusion equation:
$$\frac{d}{dt}(h^{x}(t,y)) +   h^{x}(t,y)tr \bigg{(} \frac12 g^{-1}(t,y) \frac{d}{dt} g(t,y) \bigg{)} = \frac12 \D_{g(t)}h^{x}(t,y). $$
\end{proof}

We will give as example the evolution equation of the density in the case where the family of metrics comes from the forward (and resp. backward) Ricci flow. From now Ricci flow will mean (probabilistic convention):
\begin{equation}\label{forwardRicci}
 \begin{array}{l} 
 \frac{d}{dt} g_{i,j} = -\Ric_{i,j}.
 \end{array}
\end{equation}
(respectively)
\begin{equation}\label{backwardRicci}
 \begin{array}{l} 
 \frac{d}{dt} g_{i,j} = \Ric_{i,j}.
 \end{array} 
\end{equation}

\begin{remark}
 Hamilton in \cite{Ham-three}, and later DeTurck in \cite{Deturck} have shown existence in small times of such flow. In this section we don't care about the real existence time.
\end{remark}

For $x \in M$, we will denote by $S(t,x)$ the scalar curvature at the point $x$ for the metric $g(t)$.
\begin{corolary} \label{eq-conjugue}
 For the backward Ricci flow \textup{(\ref{backwardRicci})}, we have:
 $$ \left\{\begin{array}{l}
   \displaystyle{ 
   \frac{d}{dt}(h^{x}(t,y)) + \frac12 h^{x}(t,y) S(t,y)  = \frac12 \D_{g(t)}h^{x}(t,y)}\\
   \displaystyle{ \overset{\mathcal{L}}{\lim_{t \to 0}} \quad h^{x}(t,y) d\mu_{t} = \delta_{x}} .
 \end{array} \right . $$

 For the forward Ricci flow \textup{(\ref{forwardRicci})}, we have:
 $$ \left\{\begin{array}{l}
   \displaystyle{ 
   \frac{d}{dt}(h^{x}(t,y)) - \frac12 h^{x}(t,y) S(t,y)  = \frac12 \D_{g(t)}h^{x}(t,y)}\\
   \displaystyle{ \overset{\mathcal{L}}{\lim_{t \to 0}} \quad h^{x}(t,y) d\mu_{t} = \delta_{x}} .
 \end{array} \right . $$
\end{corolary}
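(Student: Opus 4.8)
The plan is to apply the preceding proposition directly: both cases are obtained simply by substituting the defining equation of the (backward, resp.\ forward) Ricci flow into the general reaction term $\Tr\bigl(\tfrac12\, g^{-1}(t,y)\,\tfrac{d}{dt}g(t,y)\bigr)$ and identifying the resulting scalar with (half of) the scalar curvature.

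First I would treat the backward Ricci flow (\ref{backwardRicci}), for which $\tfrac{d}{dt}g_{i,j}(t,y) = \Ric_{i,j}(t,y)$. Hence in any local chart
$$\Tr\Bigl(\tfrac12\,g^{-1}(t,y)\,\tfrac{d}{dt}g(t,y)\Bigr) = \tfrac12\,g^{i,j}(t,y)\,\Ric_{i,j}(t,y) = \tfrac12\,S(t,y),$$
the last equality being the definition of the scalar curvature as the metric trace of the Ricci tensor. Plugging this into the evolution equation of the previous proposition yields exactly
$$\frac{d}{dt}\bigl(h^{x}(t,y)\bigr) + \tfrac12\,h^{x}(t,y)\,S(t,y) = \tfrac12\,\D_{g(t)}h^{x}(t,y),$$
while the limit/initial condition $\overset{\mathcal{L}}{\lim_{t\to 0}} h^{x}(t,y)\,d\mu_{t} = \delta_{x}$ is unchanged.

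For the forward Ricci flow (\ref{forwardRicci}) the only modification is a sign: $\tfrac{d}{dt}g_{i,j} = -\Ric_{i,j}$ gives $\Tr\bigl(\tfrac12\,g^{-1}\,\tfrac{d}{dt}g\bigr) = -\tfrac12\,S(t,y)$, and substituting into the same general equation produces the stated equation with $-\tfrac12\,h^{x}(t,y)\,S(t,y)$ in place of $+\tfrac12\,h^{x}(t,y)\,S(t,y)$. There is essentially no obstacle here; the only point requiring a word of care is the intrinsic, chart-independent nature of the trace term, which was already established in the proof of the previous proposition, so that the local computation of $g^{i,j}\Ric_{i,j}$ indeed delivers the globally defined scalar curvature function $S(t,\cdot)$.
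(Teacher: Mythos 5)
Your proposal is correct and is exactly the intended argument: the paper states this as an immediate corollary of the preceding proposition, with the trace term $\Tr\bigl(\tfrac12 g^{-1}\partial_t g\bigr)$ becoming $\pm\tfrac12 g^{i,j}\Ric_{i,j} = \pm\tfrac12 S$ under the respective flows. Nothing further is needed.
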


\begin{remark}
 These equations are conservative. This is not the case for the ordinary heat equation with time depending Laplacian i.e. $\D_{g(t)}$. They are conjugate heat equations which are well known in the Ricci flow theory (e.g. \cite{Topping}).
\end{remark}

\section{Damped parallel transport, and Bismut formula for Ricci flow, applications to Ricci flow for surfaces}

In this section, we will be interested in the heat equation under the Ricci flow. The principal fact is that under forward Ricci flow, the damped parallel transport or Dohrn-Guerra transport is the parallel transport defined before.  The deformation of geometry under the Ricci flow compensates the deformation of the parallel transport (i.e. the Ricci term in the usual formula for the damped parallel transport in constant metric case see (\cite {ElvLI}, \cite {thalwang}, \cite {Elwyor})). The isometry property of the damped parallel transport turns out to be an advantage for computations. In particular, for gradient estimate formulas, everything looks like in the case of a Ricci flat manifold with constant metric.
We begin with a general result independent of the fact that the flow is a Ricci flow.
Let $g(t)_{[0,T_{c}[}$ be a $C^{1,2}$ family of metrics, and consider the heat equation:
\begin{equation}\label{heat}
  \left\{\begin{array}{l} 
  \partial_{t}f(t,x) = \frac12 \D_{t} f(t,x) \\
   f(0,x)=f_{0}(x),
   \end{array} \right .
\end{equation}
where $f_{0}$ is a function over $M$. We suppose that the solution of (\ref{heat}) exists until $T_{c}$. For $T < T_{c} $, let $X^{T}_{t}$ be a $g(T-t)$-Brownian motion, $\tpar^T_{0,t}$ the associated parallel transport.

\begin{definition}
We define the damped parallel transport $ \textbf{W}_{0,t}^{T}$  as the solution of:
$$ *d ((\tpar^{T}_{0,t})^{-1} (\textbf{W}_{0,t}^{T})) = -\frac12 (\tpar^{T}_{0,t})^{-1} ( \Ric_{g(T-t)} - \partial_{t}(g(T-t)))^{\# g(T-t)}(\textbf{W}_{0,t}^{T}) \,dt $$
with
$$ \textbf{W}_{0,t}^{T} : T_{x}M \longrightarrow T_{X_{t}^{T}(x)}M , \textbf{W}_{0,0}^{T}=\Id_{T_{x}M}.$$
\end{definition}

\begin{stheorem}\label{damp}
For every solution $f(t, .)$ of \eqref{heat}, and for all $ v \in T_{x}M  $,
$$df(T-t , . )_{X_{t}^{T}(x)} ( \textbf{W}_{0,t}^{T} v)  $$
is a local martingale.
\end{stheorem}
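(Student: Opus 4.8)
The plan is to reduce the statement to a computation on the frame bundle and to check that the finite-variation part vanishes, the extra $\partial_t g$ term in the definition of $\textbf{W}^{T}_{0,t}$ being precisely what compensates the vertical drift $-\tfrac12\partial_1 G$ of the lift \eqref{eq-relevemant}. Write $g'(s):=\partial_s g(s)$, so the reversed family $s\mapsto g(T-s)$ has time-derivative $-g'(T-s)$. Let $U_t$ solve \eqref{eq-relevemant} for this reversed family, started at a $g(0)$-orthonormal frame $U_0$ over $x$; then $X^{T}_{t}=\pi(U_t)$, $\tpar^{T}_{0,t}=U_t\circ U_0^{-1}$, and $U_t$ is $g(T-t)$-orthonormal. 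Put $a(t):=U_0^{-1}(\tpar^{T}_{0,t})^{-1}\textbf{W}^{T}_{0,t}v\in\R^n$; by the definition of $\textbf{W}^{T}_{0,t}$, $a(\cdot)$ is pathwise the solution of a linear ODE with coefficients bounded on compact time intervals, hence of finite variation. Using $U_t^{-1}\circ\tpar^{T}_{0,t}=U_0^{-1}$ and the fact that $U_t$ is a $g(T-t)$-isometry $\R^n\to T_{X^{T}_{t}}M$,
$$ df(T-t,\cdot)_{X^{T}_{t}}(\textbf{W}^{T}_{0,t}v)=\big\langle\nabla^{g(T-t)}f(T-t,\cdot),\,\textbf{W}^{T}_{0,t}v\big\rangle_{g(T-t)}=\sum_{i}a^{i}(t)\,F^{i}(t),\qquad F^{i}(t):=df(T-t,\cdot)_{X^{T}_{t}}(U_te_i), $$
so it suffices to show that $\sum_i a^i F^i$ has vanishing drift.

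To compute $dF^{i}(t)$ I would apply the It\^o formula on $\mathcal F(M)$ to $\Phi^{i}(t,w):=F^{i}_{df(T-t,\cdot)}(w)=df(T-t,\cdot)_{\pi(w)}(we_i)$ along $U_t$, exactly as in the proof of Proposition \ref{mbg(t)} but keeping the explicit $t$-dependence; since, for the family $s\mapsto g(T-s)$, the drift $-\tfrac12\partial_1 G$ of \eqref{eq-relevemant} equals $+\tfrac12\langle U_te_\alpha,U_te_\beta\rangle_{g'(T-t)}V_{\alpha\beta}$ (because $\partial_t[g(T-t)]=-g'(T-t)$), this gives, modulo local martingales,
$$ dF^{i}(t)\overset{d\cal M}{\equiv}\Big(\partial_t\Phi^{i}+\tfrac12{\textstyle\sum_{j}}L_j(T-t)L_j(T-t)\Phi^{i}+\tfrac12\langle U_te_\alpha,U_te_\beta\rangle_{g'(T-t)}V_{\alpha\beta}\Phi^{i}\Big)(t,U_t)\,dt. $$
By the heat equation \eqref{heat}, $\partial_t\Phi^{i}(t,w)=-\tfrac12 F^{i}_{d(\D_{g(T-t)}f(T-t,\cdot))}(w)$. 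Iterating the scalarization Proposition above (valid, as in \cite{Hsu}, for tensors of any rank) and using $g(T-t)$-orthonormality of $U_t$, the term $\sum_j L_j(T-t)^2\Phi^{i}$ evaluated at $U_t$ is the scalarization of the rough (connection) Laplacian $\Tr_{g(T-t)}(\nabla^{g(T-t)})^{2}(df(T-t,\cdot))$ of the one-form $df(T-t,\cdot)$, \emph{without} curvature correction. And since $V_{\alpha\beta}F^{i}_\omega(w)=\delta_{i\beta}F^{\alpha}_\omega(w)$, the vertical term equals $\tfrac12(P(t)F(t))^{i}$ with $P(t):=U_t^{-1}\circ(g'(T-t))^{\#g(T-t)}\circ U_t$, a symmetric matrix. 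The Bochner identity $\Tr_g(\nabla^g)^{2}(d\phi)=d(\D_g\phi)+\Ric_g(\nabla^g\phi,\cdot)$ makes the two Laplacian terms cancel, and since $F^{i}(t)=(U_t^{-1}\nabla^{g(T-t)}f(T-t,\cdot))^{i}$ we are left with
$$ dF^{i}(t)\overset{d\cal M}{\equiv}\tfrac12\big((R(t)+P(t))F(t)\big)^{i}\,dt,\qquad R(t):=U_t^{-1}\circ\Ric_{g(T-t)}^{\#g(T-t)}\circ U_t. $$

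It remains to differentiate $a(t)$. From the definition of $\textbf{W}^{T}_{0,t}$, using $\tpar^{T}_{0,t}=U_t U_0^{-1}$ and $\partial_t(g(T-t))=-g'(T-t)$, one finds $\dot a(t)=-\tfrac12\,U_t^{-1}\big(\Ric_{g(T-t)}-\partial_t(g(T-t))\big)^{\#g(T-t)}\big(U_t a(t)\big)=-\tfrac12\big(R(t)+P(t)\big)a(t)$ — this is exactly the step where the $\partial_t g$ term built into the damped transport reproduces the operator $R+P$. Since $a(\cdot)$ has finite variation there is no covariation term in the product rule, so
$$ d\Big({\textstyle\sum_{i}}a^{i}(t)F^{i}(t)\Big)\overset{d\cal M}{\equiv}\tfrac12\big(\langle(R+P)F,a\rangle-\langle F,(R+P)a\rangle\big)\,dt=0, $$
because $R(t)+P(t)$ is symmetric ($\Ric$ and $g'$ are symmetric $2$-tensors, and conjugation by the $g(T-t)$-isometry $U_t$ preserves symmetry). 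Hence $df(T-t,\cdot)_{X^{T}_{t}}(\textbf{W}^{T}_{0,t}v)$ is a local martingale.

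I expect the main difficulty to lie in the middle step: writing down correctly the time-dependent It\^o formula on $\mathcal F(M)$ for $\Phi^{i}(t,w)$, identifying $\sum_j L_j(T-t)^2$ acting on scalarized one-forms with the \emph{connection} Laplacian (not the Hodge one), and tracking all the $g'$ and Bochner signs so that the combination $\Ric_{g(T-t)}-\partial_t(g(T-t))$ of the definition indeed matches $R+P$. For constant $g$ this collapses to the classical computation behind Dohrn--Guerra / damped parallel transport (\cite{ElvLI},\cite{thalwang},\cite{Elwyor}); the only genuinely new feature is that the vertical drift $-\tfrac12\partial_1 G$ of \eqref{eq-relevemant} supplies precisely the $\partial_t g$ correction.
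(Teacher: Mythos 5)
Your proposal is correct and follows essentially the same route as the paper: both proofs scalarize $df$ on the frame bundle, write the quantity as $\langle \widetilde{df}(T-t,U^{T}_{t}),(U^{T}_{t})^{-1}\textbf{W}^{T}_{0,t}v\rangle_{\R^{n}}$, compute the drift from the generator $\tfrac12\D^{H}_{T-t}+\tfrac12\,\partial_{t}g(T-t)(\ev_{e_i},\ev_{e_j})V_{i,j}$, cancel the Laplacian contributions via the heat equation together with the Weitzenb\"ock/Bochner identity, and let the symmetric operator $(\Ric_{g(T-t)}-\partial_{t}(g(T-t)))^{\#}$ built into $\textbf{W}^{T}_{0,t}$ absorb what remains. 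Your repackaging of the final cancellation as $\langle(R+P)F,a\rangle-\langle F,(R+P)a\rangle=0$ for the symmetric matrix $R+P$ is only a more compact phrasing of the paper's term-by-term cancellation of the $\partial_{t}g$ and $\Ric$ pieces.
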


\begin{proof}
 Recall the equation of a parallel transport over the $g(T-t)$-Brownian motion $X^{T}_{t}(x)$:
\begin{equation} \left\{
    \begin{array} {l}
    *dU^{T}_{t}=\sum_{i=1}^d L_i(T-t,U^{T}_t)*dW^i - \frac12 \partial_{t}(g(T-t)) (U^{T}_t e_{\alpha} , U^{T}_t e_{\beta}) V_{\alpha , \beta}(U^{T}_t) \,dt  \\
    U^{T}_{0} \in (\mathcal{O}_{x}(M),g(T)). 
     \end{array} \right . \label{parT}
\end{equation}
For $f\in \mathcal{C}^{\infty}(M) $, its scalarization:
$$\begin{array}{rcl}
\widetilde{df}: \mathcal{F}(M) & \longrightarrow&  \R^{n} \\
             U & \longmapsto & (df(Ue_1), ... , df(Ue_n )  ),
            
\end{array} $$
yields the following formula in $\R^{n} $:
$$\begin{array}{rcl}
 df(T-t , . )_{X_{t}^{T}(x)} ( \textbf{W}_{0,t}^{T}v) &=& \langle \widetilde{df} (T-t , U^{T}_{t}) ,(U^{T}_{t})^{-1} \textbf{W}_{0,t}^{T} v \rangle_{\R^{n}}, \\
\end{array}$$
for every $v \in T_{x}M $.
To recall the notation let:
$$\begin{array}{rcl}
\ev_{e_{i}}: \mathcal{F}(M) & \longrightarrow&  TM \\
             U & \longmapsto & Ue_i
            
\end{array} $$
and recall that $U^{T}_{t} $, solution of (\ref{parT}), is a diffusion associated to the generator 
$$\frac12 \D_{T-t}^{H} - \frac12 \partial_{t}(g(T-t))(\ev_{ei} (.) ,\ev_{ej} (.))  V_{i,j}(.)  $$ 
where $\D_{T-t}^{H} $ is the horizontal Laplacian in $\mathcal{M} $, associated to the metric $g(T-t)$.
In the It\^{o} sense, we get: 
\begin{align*}
 &d(df(T-t , . )_{X_{t}^{T}(x)} ( \textbf{W}_{0,t}^{T})v)= d \langle \widetilde{df} (T-t , U^{T}_{t}) ,(U^{T}_{t})^{-1} \textbf{W}_{0,t}^{T} v \rangle_{\R^{n}} \\
                          &\overset{d \cal M }{\equiv}  \langle  -(\frac{d}{dt} \widetilde{df})(T-t,.)(U^{T}_{t}) dt 
                          +[ \frac12\D_{T-t}^{H}  \widetilde{df}(T-t,.) \\
                         &- \frac12 \partial_{t}(g(T-t))(\ev_{ei} . ,\ev_{ej}. )  V_{i,j}(.) \widetilde{df}(T-t,.)](U^{T}_{t})\,dt ,(U^{T}_{t})^{-1} \textbf{W}_{0,t}^{T}v \rangle_{\R^{n}} \\ 
                         &+  \langle  (\widetilde{df} (T-t , U^{T}_{t})) ,(U^{T}_{0})^{-1} d((\tpar^{T}_{0,t})^{-1} (\textbf{W}_{0,t}^{T}))v  \rangle_{\R^{n}} \\
                          &\overset{d \cal M }{\equiv}  -(\frac{d}{dt} df)(T-t,.)((\textbf{W}_{0,t}^{T}))v) \,dt  +\langle  [ \frac12\D_{T-t}^{H}  \widetilde{df}(T-t,.) \\
&- \frac12 \partial_{t}(g(T-t))(\ev_{ei} . ,\ev_{ej} )  V_{i,j}(.) \widetilde{df}(T-t,.)](U^{T}_{t})\,dt ,(U^{T}_{t})^{-1}\textbf{W}_{0,t}^{T}v \rangle_{\R^{n}} \\ 
&-  \frac12 \langle  (\widetilde{df} (T-t , U^{T}_{t})),(U^{T}_{0})^{-1}(\tpar^{T}_{0,t})^{-1} ( \Ric_{g(T-t)} - \partial_{t}(g(T-t)))^{\# g(T-t)}(\textbf{W}_{0,t}^{T})v \,dt\rangle_{\R^{n}}. 
\end{align*}
We shall make separate computations for each term in the previous equality. 
Using the well known formula (e.g. \cite{Hsu}, page 193)
 $$ \D^{H}  \widetilde{df} =  \widetilde{ \D df} ,$$ 
we first note that:
$$\begin{array}{l}
\langle \frac12\D_{T-t}^{H}  \widetilde{df}(T-t,.) (U^{T}_{t}) , (U^{T}_{t})^{-1}\textbf{W}_{0,t}^{T}v   \,dt \rangle_{\R^{n}}\\
= \frac12 \langle \widetilde{ \D_{T-t} df}(T-t,.) (U^{T}_{t})   ,(U^{T}_{t})^{-1}\textbf{W}_{0,t}^{T}v \rangle_{\R^{n}} \,dt \\
=  \frac12  \D_{T-t} df(T-t,.) (\textbf{W}_{0,t}^{T}v) \,dt ,
\end{array} $$
By definition:
$$\begin{array}{rcl}
{V}_{i,j} \widetilde{df} (u) &=& \frac{d}{dt}|_{t=0} \widetilde{df}(u(\Id + t E_{ij})) \\
                           &=& \frac{d}{dt}|_{t=0} (df (u(\Id + t E_{ij})e_{s}))_{s=1..n} \\
                           &=& (df(u \delta_{i}^{s} e_{j}))_{s=1..n} \\
                            &=& ( 0, ... ,0,df(ue_{j}),0, ... ,0)\quad i\text{-th position}, 
                            \end{array} $$
so that:
 $$\begin{array}{l}
\sum_{ij} \partial_{t}(g(T-t))(\ev_{ei} . ,\ev_{ej} . ) V_{i,j}(.) \widetilde{df}(T-t,.)(U^{T}_{t})\,dt \\
= \sum_{ij} \partial_{t}(g(T-t))(U^{T}_{t}e_{i} ,U^{T}_{t}e_{j} ) df(U^{T}_{t}e_{j}) e_{i} \,dt \\
= (\langle \nabla^{T-t}f(T-t , .) ,\sum_{j}\partial_{t}(g(T-t))(U^{T}_{t}e_{i} ,U^{T}_{t}e_{j} )U^{T}_{t}e_{j} \rangle_{T-t}\,dt)_{i=1..n}\\
=(df(T-t , \partial_{t}(g(T-t))^{\# T-t}(U^{T}_{t}e_{i}))\,dt)_{i=1..n}.
\end{array} $$
Then 
$$\begin{array}{l}
 d(df(T-t , . )_{X_{t}^{T}(x)} ((\textbf{W}_{0,t}^{T})v))    \\                      
                          \overset{d \cal M }{\equiv}  -\frac{d}{dt} df(T-t,.)((\textbf{W}_{0,t}^{T}v ) \,dt  \\
                          - \frac12 \langle (df(T-t , \partial_{t}(g(T-t))^{\# T-t}(U^{T}_{t}e_{i})))_{i=1..n} , (U^{T}_{t})^{-1} \textbf{W}_{0,t}^{T}v  \rangle_{\R^{n}} \,dt \\
                          + \frac12  \D_{T-t} df(T-t,.) ( \textbf{W}_{0,t}^{T}v) \,dt  \\
-  \frac12 \langle  (\widetilde{df} (T-t , U^{T}_{t})),(U^{T}_{0})^{-1}(\tpar^{T}_{0,t})^{-1} ( \Ric_{g(T-t)} - \partial_{t}(g(T-t))^{\# g(T-t)}(\textbf{W}_{0,t}^{T})v \,dt\rangle_{\R^{n}} .
\end{array}$$
By the fact that $U^{T}_{t} $ is a $g(T-t)$-isometry we have:
$$\begin{array}{l}
 \langle (df(T-t , \partial_{t}(g(T-t))^{\# T-t}(U^{T}_{t}e_{i})))_{i=1..n} ,(U^{T}_{t})^{-1} \textbf{W}_{0,t}^{T} v  \rangle_{\R^{n}} \\
 = \langle \sum_{i} \partial_{t}(g(T-t))(U^{T}_{t}e_{i} ,\nabla^{T-t}f(T-t , .) ) e_{i} , (U^{T}_{t})^{-1} \textbf{W}_{0,t}^{T}v  \rangle_{\R^{n}} \\
 = \langle \sum_{i} \partial_{t}(g(T-t))(U^{T}_{t}e_{i} ,\nabla^{T-t}f(T-t , .) ) U^{T}_{t}e_{i} , \textbf{W}_{0,t}^{T} v  \rangle_{T-t} \\
 = \langle \partial_{t}(g(T-t))^{\# T-t} (\textbf{W}_{0,t}^{T}v) , \nabla^{T-t}f(T-t , .) \rangle_{T-t} ,
 \end{array}$$
Consequently:
\begin{align*}
& d(df(T-t , . )_{X_{t}^{T}(x)} (\textbf{W}_{0,t}^{T}v))  \\                        
 &                         \overset{d \cal M }{\equiv}  -\frac{d}{dt} df(T-t,.)(\textbf{W}_{0,t}^{T}v ) \,dt \\
 &                         - \frac12 \langle \nabla^{T-t}f(T-t , .) , \partial_{t}(g(T-t))^{\# T-t}(\textbf{W}_{0,t}^{T}v)   \rangle_{
                          T-t} \,dt \\
 &                         + \frac12  \D_{T-t} df(T-t,.) (  \textbf{W}_{0,t}^{T} v) \,dt  \\
 &  - \frac12 \langle  (\widetilde{df} (T-t , U^{T}_{t})),(U^{T}_{t})^{-1} ( \Ric_{g(T-t)} - \partial_{t}(g(T-t)))^{\# g(T-t)}(\textbf{W}_{0,t}^{T})v \,dt\rangle_{\R^{n}} \\
 &      \overset{d \cal M }{\equiv}  -\frac{d}{dt} df(T-t,.)(\textbf{W}_{0,t}^{T}v ) \,dt      + \frac12  \D_{T-t} df(T-t,.) (  \textbf{W}_{0,t}^{T} v) \,dt  \\
 &- \frac12   df (T-t , \Ric_{g(T-t)}^{\# g(T-t)}(\textbf{W}_{0,t}^{T}v) \,dt. 
\end{align*}
But recall that $f$ is a solution of:
$$\frac{\partial}{\partial t}f  = \frac12 \D_{t} f  ,$$
so that
 $$ - \frac{\partial}{\partial t}df(T-t ,.) = - \frac12 d \D_{T-t} f(T-t , .). $$ 
 We shall use the Hodge-de Rham Laplacian $\square_{T-t} = -(d \delta_{T-t} + \delta_{T-t} d) $ 
which commutes with the de Rham differential, and we shall use the well-known Weitzenb\"ock formula 
(\cite{Jost,Rjost}), 
which says that for $\theta $ a 1-form, $\square_{T-t} \theta = \D_{T-t} \theta - \Ric _{T-t}\theta  $.
We get:
 $$\begin{array}{lcl}
d \D_{T-t} f(T-t , .) &=& d \square_{T-t} f(T-t , .) \\
                      &=& \square_{T-t} d f(T-t , .) \\
                      &=& \D_{T-t} d f(T-t , .) - \Ric_{T-t}  d f(T-t , .) .\\
\end{array}$$
Finally: 
 $$\begin{array}{lcl}
 d(df(T-t , . )_{X_{t}^{T}(x)} (\textbf{W}_{0,t}^{T}v))                       
                          &\overset{d \cal M }{\equiv}& \frac12 \Ric_{T-t} df(T-t,.) (\textbf{W}_{0,t}^{T}v) \,dt \\
                          & & -\frac12 \langle \nabla^{T-t}f(T-t , .) ,\Ric_{T-t}^{\# T-t}(\textbf{W}_{0,t}^{T}v)   \rangle_{
                          T-t} \,dt \\
                          &\overset{d \cal M }{\equiv}& 0,

\end{array}$$
by duality; for a 1-form $\theta$ and for $v \in TM$:
$$ \Ric(\theta) (v) = \Ric(\theta^{\#}, v.)$$
where$ \langle \theta^{\#}, v \rangle = \theta (v)$ .
\end{proof}
%

\begin{remark} \label{remarque01}
For the forward Ricci flow, we have:
  $$  \tpar^{T}_{0,t}*d((\tpar^{T}_{0,t})^{-1}\textbf{W}^{T}_{0,t}) = 0 .$$

For the backward Ricci flow, we have:
  $$  \tpar^{T}_{0,t}*d((\tpar^{T}_{0,t})^{-1}\textbf{W}^{T}_{0,t}) = - \Ric_{T-t}^{\# T-t}( \textbf{W}^{T}_{0,t}) \,dt .$$

When the family of metrics is constant, we have the usual damped parallel transport, wich satifies:
 $$  \tpar_{0,t}*d((\tpar_{0,t})^{-1}\textbf{W}_{0,t}) = - \frac12 \Ric^{\#}( \textbf{W}_{0,t}) \,dt .$$
\end{remark}

\begin{remark}
  Roughly speaking, the result says that the deformation of the metric under Ricci flow makes the damped parallel transport behaves like the damped parallel transport in the case of a constant metric with flat Ricci curvature.   
\end{remark}

For the heat equation under the forward Ricci flow, we take the probabilistic convention:
\begin{equation}\label{heatforwardRicci}
  \left\{\begin{array}{l} 
  \partial_{t}f(t,x) = \frac12 \D_{t} f(t,x) \\
  \frac{d}{dt} g_{i,j} = -\Ric_{i,j}\\
  f(0,x)=f_{0}(x) 
   \end{array} \right .
\end{equation}

We shall give a Bismut type formula and a gradient estimate formula for the above equation. For notation, let $T_{c}$ be the maximal life time of the forward Ricci flow $g(t)_{t \in[0,T_{c}[}$, solution of (\ref{forwardRicci}). For $T < T_{c} $, $X^{T}_{t}$ is a $g(T-t)$-Brownian motion and $\tpar^T_{0,t}$ the associated parallel transport.
In this case, for a solution $f(t,.)$ of (\ref{heatforwardRicci}), $ f(T-t,X^{T}_{t}(x))$ is a local martingale for any $x \in M$. 
When going back in time, one has to remember all deformations of the geometry.

We now recall a well known lemma giving a Bismut type formula (e.g.  \cite{EJL}).
Let $f(t,.)$ and $g(t)$ be solution of (\ref{heatforwardRicci}), $T<T_{c}$, and $X_{t}^{T}(x) $ a $g(T-t)$-Brownian motion.

\begin{lemma}\label{lemme}
 For all $\R^{n}$-valued process $ k$ such that $k \in L^{2}_{loc}(W)$ where $W$ is some $\R^{n} $-valued Brownian motion, and for all $v \in T_{x}M   $,
 
 $$ \begin{array}{lcl}
 N_{t} &=&  df(T-t , . )_{X_{t}^{T}(x)} (U^{T}_{t})[(U^{T}_{0})^{-1}v- \int_{0}^{t} k_{r} dr]\\        &+& f(T-t,X_{t}^{T}(x)) \int_{0}^{t} \langle k_{r} , dW \rangle_{\R^{n}}
   \end{array}$$
 is a local martingale.
\end{lemma}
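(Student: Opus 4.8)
The plan is to show that $N_t$ is a local martingale by computing its Itô differential and checking that the finite-variation part vanishes. I would split $N_t$ into three pieces: $A_t := df(T-t,\cdot)_{X_t^T(x)}(\textbf{W}_{0,t}^T v)$, where I note that $(U^T_t)(U^T_0)^{-1} = \tpar^T_{0,t}$ acting on the relevant vector only agrees with $\textbf{W}^T_{0,t}$ up to the damped-transport correction — so in fact I should first rewrite $df(T-t,\cdot)(U^T_t)(U^T_0)^{-1}v$ in terms of $\textbf{W}^T_{0,t}$, picking up the Ricci drift from Remark \ref{remarque01} (forward Ricci flow case gives $\tpar^T_{0,t}*d((\tpar^T_{0,t})^{-1}\textbf{W}^T_{0,t})=0$, i.e. here $\textbf{W}^T_{0,t} = \tpar^T_{0,t}$ after all, which simplifies things and is exactly the point of Theorem \ref{damp}); the term $B_t := -df(T-t,\cdot)_{X^T_t(x)}(U^T_t)\int_0^t k_r\,dr$; and the term $C_t := f(T-t,X^T_t(x))\int_0^t\langle k_r,dW\rangle$.

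First I would invoke Theorem \ref{damp}: since $f(t,\cdot)$ solves \eqref{heatforwardRicci} and $\textbf{W}^T_{0,t}=\tpar^T_{0,t}$ in the forward Ricci case, $df(T-t,\cdot)_{X^T_t(x)}(\tpar^T_{0,t}w)$ is a local martingale for fixed $w\in T_xM$; applying this with $w=(U^T_0)^{-1}v$ handles the $v$-part of $A_t$ directly. For the remaining terms I would compute $dA_t$, $dB_t$, $dC_t$ in the Itô sense using the $g(t)$-Itô formula \eqref{g(t)-ito}: the martingale part of $df(T-t,\cdot)_{X^T_t(x)}(U^T_t)$ is $\langle \nabla^{T-t}f, \tpar^T_{0,t}v_i\rangle_{T-t}\,dW^i$ in the scalarized picture, equivalently $\langle \widetilde{df}(T-t,U^T_t), e_i\rangle\,dW^i$ up to identifications. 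The key cancellation is between the finite-variation part produced when $\int_0^t k_r\,dr$ in $B_t$ meets the $dt$-drift of $df(T-t,\cdot)(U^T_t)$ (which, by the computation inside the proof of Theorem \ref{damp}, reduces to a pure Ricci/heat-equation term that is killed), and the cross-variation term $d\langle f(T-t,X^T_t), \int_0^t\langle k_r,dW\rangle\rangle$ arising in $dC_t$. Concretely, $C_t$ contributes $\langle k_t, \widetilde{df}(T-t,U^T_t)\rangle\,dt$ from the bracket (plus a genuine martingale term $f\cdot\langle k_t,dW\rangle$), while $B_t$ contributes $-\langle k_t,\widetilde{df}(T-t,U^T_t)\rangle\,dt$ from differentiating the $\int_0^t k_r\,dr$ factor against the martingale part of $df(T-t,\cdot)(U^T_t)$; these two $dt$-terms cancel. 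All remaining $dt$-terms organize into $\partial_t f - \tfrac12\D_{T-t}f$ evaluated along the transported vector, which vanishes by \eqref{heatforwardRicci}.

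The main obstacle I expect is bookkeeping rather than conceptual: keeping the scalarized $\R^n$-valued picture consistent with the $T_xM$-valued picture, making sure the $g(T-t)$-isometry of $U^T_t$ is used correctly so that inner products transport without extra curvature terms, and correctly identifying which drift terms in $d(df(T-t,\cdot)(U^T_t))$ survive after using the heat equation and the Weitzenböck identity — exactly the manipulations already carried out in the proof of Theorem \ref{damp}, which I would cite rather than redo. Once those are in hand, the local martingale property of $N_t$ follows because its finite-variation part is identically zero. Since the paper states the proof is omitted (``a well known lemma''), I would in fact just indicate this reduction to Theorem \ref{damp} plus the integration-by-parts cancellation and refer to \cite{EJL} for details.
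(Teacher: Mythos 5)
Your proposal is correct and follows essentially the same route as the paper: decompose $N_t$, use Theorem \ref{damp} (with $\textbf{W}^{T}_{0,t}=\tpar^{T}_{0,t}$ in the forward Ricci flow case) to kill the drift of $df(T-t,\cdot)_{X_t^T(x)}(U^T_t)w$, and cancel the $dt$-term coming from differentiating $\int_0^t k_r\,dr$ against the current value of that local martingale with the cross-variation $d\bigl[f(T-t,X_t^T(x)),\int_0^{\cdot}\langle k_r,dW\rangle\bigr]$. The only cosmetic imprecision is the phrase about the finite-variation factor ``meeting the $dt$-drift'' of $df(T-t,\cdot)(U^T_t)$ (that drift is zero; the relevant term is the martingale's current value times $k_t\,dt$), but your explicit formulas are the right ones and match the paper's computation.
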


\begin{proof}
The first remark after theorem \ref{damp} yield that the first term is a semi-martingale.
By It\^{o} calculus we get:
 $$d(f(T-t,X_{t}^{T}(x)) )= df(T-t,.)_{X_{t}^{T}(x)} U_{t}e_{i}dW^{i}. $$
\\
With $ (l_{i})_{i=1..n}$ a $g(T)$-orthonormal frame of $T_{x}M $, we write $N_{t}$ as:
$$ \begin{array}{lcl}
N_{t} &=& \sum_{i} (df(T-t , . )_{X_{t}^{T}(x)} (U^{T}_{t} (U^{T}_{0})^{-1})l_{i})( v_{i} -\int_{0}^{t} \langle U^{T}_{0}(k_{r}) , l_{i} \rangle_{T} dr)\\
       &+&f(T-t,X_{t}^{T}(x)) \int_{0}^{t} \langle k_{r} , dW \rangle_{\R^{n}} \\
   \end{array}$$ 
with  \ref{damp}:
$$\begin{array}{lcl}
                       
  d N_{t} &\overset{d \cal M }{\equiv}& \sum_{i} (df(T-t , . )_{X_{t}^{T}(x)} (U^{T}_{t} (U^{T}_{0})^{-1})l_{i}) (-\langle U^{T}_{0}(k_{t}) , l_{i} \rangle_{T} dt) \\
           && + d( f(T-t,X_{t}^{T}(x)))   \langle k_{t} , dW \rangle_{\R^{n}} \\
          &\overset{d \cal M }{\equiv}&  \sum_{i} (df(T-t , . )_{X_{t}^{T}(x)} (U^{T}_{t} (U^{T}_{0})^{-1})l_{i}) (-\langle U^{T}_{0}(k_{t}) , l_{i} \rangle_{T} dt) \\
          &&+ \sum_{i} df(T-t,.)_{X_{t}^{T}(x)} ( U^{T}_{t}l_{i})dW^{i}(\sum_{j}k_{t}^{j}dW^{j})\\
          &\overset{d \cal M }{\equiv}& 0.
\end{array}$$
\end{proof}

\begin{remark}
Since $T$ is smaller than the explosion time $T_{c}$, and by the compactness of $M$, $N_{t}$ is clearly a true martingale, so we could use the martingale property for global estimate, or the Doob optional sampling theorem for local estimate (e.g.~\cite{thalwang}).
\end{remark}

\begin{corolary}
Let $v \in T_{x}M$, and take for example $ k_{r} = \frac{(U^{T}_{0})^{-1}v}{T}\1_{[0,T]}(r)$ then:
 $$df(T,.)_{x}v = \frac{1}{T}\sum_{i} \E [f_{0}(X_{T}^{T}(x)) \langle (U^{T}_{0})^{-1}v, e_{i} \rangle_{\R^{n}} W_{i}(T)] .$$ 
\end{corolary}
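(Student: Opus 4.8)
The plan is to specialize Lemma~\ref{lemme} to the deterministic, bounded integrand $k_r = \frac{(U^{T}_{0})^{-1}v}{T}\1_{[0,T]}(r)$ and then evaluate the resulting (local) martingale $N_t$ at the two endpoints $t=0$ and $t=T$. First I would observe that this $k$ is trivially admissible: it is deterministic and bounded, hence in $L^{2}_{loc}(W)$, so Lemma~\ref{lemme} applies and $N_t$ is a local martingale on $[0,T]$.

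Before taking expectations I would upgrade $N_t$ from a local to a genuine martingale on $[0,T]$. Since $T<T_{c}$ and $M$ is compact, the solution $f$ of \eqref{heatforwardRicci} and its differential $df$ are bounded on $[0,T]\times M$; moreover in the forward Ricci flow case $\textbf{W}^{T}_{0,t}=\tpar^{T}_{0,t}$ is the parallel transport (Remark~\ref{remarque01}), hence a $g(T-t)$-isometry, and $U^{T}_{t}$ is an isometry as well, so the first term of $N_t$ stays bounded; the second term is an $L^{2}$ stochastic integral against $W$ with bounded integrand. Hence $N_t$ is a true martingale and $\E[N_0]=\E[N_T]$.

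Then I would compute the two endpoint values. At $t=0$ one has $X^{T}_{0}(x)=x$, both time-integrals vanish, and $U^{T}_{0}(U^{T}_{0})^{-1}=\Id$, so $N_0 = df(T,\cdot)_x v$, a deterministic quantity. At $t=T$ the drift integral is $\int_{0}^{T}k_r\,dr=(U^{T}_{0})^{-1}v$, so the bracket $(U^{T}_{0})^{-1}v-\int_{0}^{T}k_r\,dr$ vanishes and the first term of $N_T$ is $0$; the remaining term is $f(0,X^{T}_{T}(x))\int_{0}^{T}\langle k_r,dW\rangle_{\R^{n}} = \frac{1}{T}\,f_{0}(X^{T}_{T}(x))\,\langle (U^{T}_{0})^{-1}v,W(T)\rangle_{\R^{n}}$, using that $k$ is constant in $r$. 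Equating $\E[N_0]=\E[N_T]$ and expanding the inner product over the canonical basis $(e_i)$ yields
$$ df(T,\cdot)_x v = \frac{1}{T}\sum_{i}\E\big[f_{0}(X^{T}_{T}(x))\,\langle (U^{T}_{0})^{-1}v,e_i\rangle_{\R^{n}}\,W_i(T)\big], $$
which is the claim.

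The only point that genuinely needs attention is the local-to-true martingale step, i.e. the integrability allowing one to evaluate $N_t$ in expectation at $t=T$; but this is precisely what compactness of $M$ together with $T<T_{c}$ (and the isometry property of the parallel transport) furnish, as already recorded in the remark following Lemma~\ref{lemme}. Consequently the corollary is essentially a direct substitution into Lemma~\ref{lemme}, and I do not anticipate a real obstacle.
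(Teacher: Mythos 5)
Your proposal is correct and follows exactly the paper's own route: invoke the remark after Lemma \ref{lemme} to upgrade $N_t$ to a true martingale, note that the chosen $k_r$ makes the bracket $(U^{T}_{0})^{-1}v-\int_0^T k_r\,dr$ vanish at $t=T$, and equate $\E[N_0]=\E[N_T]$. The only difference is that you spell out the integrability and endpoint evaluations in more detail than the paper does.
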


\begin{proof}
With the above remark, $N_{t}$ is a martingale. The choice of $k_{r} $ gives 
$(U^{T}_{0})^{-1}v- \int_{0}^{T} k_{r} dr = 0$; the result follows by taking expectation at time $ 0$ and $T$. 
\end{proof}

We can give the following estimate for the gradient of the solution of (\ref{heatforwardRicci}): 

\begin{corolary}
 Let $ \| f \|_{\infty} = \sup_{M}|f_{0}| $. For $T<T_{c}$:
 $$ \sup_{x \in M} \Vert \nabla^{T}f(T,x) \Vert_{T} \text{ is decreasing in time }$$ 
 and:
 $$\sup_{x \in M} \Vert \nabla^{T}f(T,x) \Vert_{T} \leq \frac { \| f \|_{\infty} }{\sqrt{T}}.$$
\end{corolary}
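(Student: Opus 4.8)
The plan is to combine Theorem~\ref{damp} (with the first identity of Remark~\ref{remarque01}) and the Bismut formula of the preceding corollary. Under the forward Ricci flow the damped parallel transport equals the parallel transport, $\textbf{W}_{0,t}^{T}=\tpar^{T}_{0,t}$, which is a $g(T-t)$-isometry from $(T_{x}M,g(T))$ onto $(T_{X^{T}_{t}(x)}M,g(T-t))$; and since $M$ is compact and $T<T_{c}$, the local martingale in Theorem~\ref{damp} is a true martingale, exactly as in the remark following Lemma~\ref{lemme}.

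\emph{Monotonicity.} Fix $T_{1}<T_{2}<T_{c}$ and set $s=T_{2}-T_{1}$. Let $X^{T_{2}}_{t}(x)$ be a $g(T_{2}-t)$-Brownian motion; evaluating the martingale $t\mapsto df(T_{2}-t,\cdot)_{X^{T_{2}}_{t}(x)}(\textbf{W}^{T_{2}}_{0,t}v)$ at $t=0$ and $t=s$ gives, for every $v\in T_{x}M$,
$$ df(T_{2},\cdot)_{x}(v)=\E\!\left[df(T_{1},\cdot)_{X^{T_{2}}_{s}(x)}\big(\tpar^{T_{2}}_{0,s}v\big)\right]. $$
Because $\tpar^{T_{2}}_{0,s}$ is an isometry into $(T_{X^{T_{2}}_{s}(x)}M,g(T_{1}))$, the Cauchy--Schwarz inequality for $g(T_{1})$ yields $|df(T_{2},\cdot)_{x}(v)|\le \E\big[\Vert\nabla^{T_{1}}f(T_{1},X^{T_{2}}_{s}(x))\Vert_{T_{1}}\big]\,\Vert v\Vert_{T_{2}}\le \big(\sup_{y}\Vert\nabla^{T_{1}}f(T_{1},y)\Vert_{T_{1}}\big)\Vert v\Vert_{T_{2}}$. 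Taking the supremum over $v$ with $\Vert v\Vert_{T_{2}}=1$, then over $x$, gives $\sup_{x}\Vert\nabla^{T_{2}}f(T_{2},x)\Vert_{T_{2}}\le \sup_{x}\Vert\nabla^{T_{1}}f(T_{1},x)\Vert_{T_{1}}$, i.e.\ monotonicity.

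\emph{The bound.} Summing the Bismut formula of the preceding corollary over $i$ gives $df(T,\cdot)_{x}v=\tfrac1T\,\E\big[f_{0}(X^{T}_{T}(x))\,\langle (U^{T}_{0})^{-1}v,W(T)\rangle_{\R^{n}}\big]$. Here $a:=(U^{T}_{0})^{-1}v$ is deterministic with $\Vert a\Vert_{\R^{n}}=\Vert v\Vert_{T}$, since $U^{T}_{0}\in(\mathcal{O}_{x}(M),g(T))$ is a $g(T)$-isometry; hence $\E\big[|f_{0}(X^{T}_{T}(x))\langle a,W(T)\rangle_{\R^{n}}|\big]\le \Vert f\Vert_{\infty}\,\E[\langle a,W(T)\rangle_{\R^{n}}^{2}]^{1/2}=\Vert f\Vert_{\infty}\sqrt{T}\,\Vert v\Vert_{T}$. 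Therefore $|df(T,\cdot)_{x}v|\le \Vert f\Vert_{\infty}\Vert v\Vert_{T}/\sqrt{T}$, so $\Vert\nabla^{T}f(T,x)\Vert_{T}\le \Vert f\Vert_{\infty}/\sqrt{T}$, and taking the supremum over $x$ finishes the proof.

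The arguments are essentially bookkeeping: the only points needing care are checking that the relevant local martingales are true martingales — immediate from compactness of $M$ and $T<T_{c}$ — and keeping track of the isometries $\tpar^{T}_{0,t}$ and $U^{T}_{0}$ when passing between $\R^{n}$, $(T_{x}M,g(T))$ and $(T_{X^{T}_{t}(x)}M,g(T-t))$. I do not foresee a genuine obstacle.
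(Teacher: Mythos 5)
Your proposal is correct and follows essentially the same route as the paper: both parts rest on the martingale of Theorem~\ref{damp} combined with the isometry of $\textbf{W}_{0,t}^{T}=\tpar^{T}_{0,t}$ under the forward Ricci flow for the monotonicity, and on the Bismut formula with $k_{r}=(U^{T}_{0})^{-1}v/T$ plus Cauchy--Schwarz and the It\^o isometry for the $\|f\|_{\infty}/\sqrt{T}$ bound. The only differences are presentational (you quantify over all $v$ and take suprema where the paper evaluates at a maximizing point and at $v=\nabla^{T}f(T,x)$), so there is nothing further to add.
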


\begin{proof}
Take $x \in M$ such that $\parallel\nabla^{T}f(T,x) \parallel_{T}$ is maximal. Using the
 damped parallel transport \ref{damp} we obtain that for all $v\in T_{x}M$: 
  $$df(T-t,X_{t}^{T}(x)) \textbf{W}_{0,t}^{T}v ,$$
is a local martingale.
By compactness, this is a true martingale.
Taking $v =\nabla^{T}f(T,x) $ and averaging the previous martingale at time $0$ and $t$ we get:
 $$\parallel\nabla^{T}f(T,x) \parallel_{T}^{2} = \E[ \langle \nabla^{T-t}f(T-t,X_{t}^{T}(x)),\textbf{W}_{0,t}^{T}v \rangle_{T-t}  ].$$
Using \ref{damp}, we get the first result.

If we choose $ k_{r} = \frac{(U^{T}_{0})^{-1}v}{T}\mathbbm{1}_{[0,T]}(r)$ in \ref{lemme}, then $N_{t} $ is a martingale. Taking expectations at times $0$ and $T$, we obtain 
$$df(T,.)_{x}v = \frac1T \E [f_{0}(X_{T}^{T}(x)) \int_{0}^{T}\langle U^{T}_{0})^{-1}v, dW \rangle_{\R^{n}} ]. $$
For $x\in M$ and $v = \nabla^{T}f(T,x)  $, Schwartz inequality gives
$$\parallel\nabla^{T}f(T,x) \parallel_{T}^{2} \le \frac{\mathcal{M}_{0}}{T} \E \left[\left| \int_{0}^{T} \langle U^{T}_{0})^{-1}v, dW \rangle_{\R^{n}}\right|^{2} \right]^{\frac12}. $$
We have: 
$$ \E \left[\left| \int_{0}^{T} \langle U^{T}_{0})^{-1}v, dW \rangle_{\R^{n}}\right|^{2} \right] = T \parallel v \parallel_{T}^2. $$
The result follows.
\end{proof}

For geometric interpretation, let us give an example of normalized Ricci flow for surfaces (which is completely understood e.g. \cite{chowknopf}). We are interested in this example because the equation for the scalar curvature under this flow is a reaction-diffusion equation which is quite similar to the heat equation under Ricci flow. We will give a gradient estimate formula for the scalar curvature under normalized Ricci flow which gives in the case $ \chi(M) < 0$ (the easiest case) the convergence of the metric to a metric of constant curvature.

The normalized Ricci flow of surfaces comes from normalizing the metric by some time dependent function to preserve the volume. Let $M$ be a 2-dimensional manifold, $R(t) $ the scalar curvature, 
$r= {\int_{M} R_{t} d\mu_{t}}/{\mu_{t}(M)}$ its average (which will be constant in time, as topological constant, e.g. Gauss-Bonnet). We get the following equation for normalized Ricci flow: 
 $$ \frac{d}{dt} g_{i,j}(t)  = (r-R(t)) g_{i,j}(t).$$

\begin{remark}
Hamilton gives a proof of the existence of solutions to this equation, defined for all time ( e.g. \cite{chowknopf})). 
\end{remark}

Recall that  (e.g. \cite{chowknopf}) the equation for the scalar curvature $R$ is:
$$ \frac{\partial}{\partial t} R = \D_{t}R + R(R-r). $$

\begin{proposition}\label{prop-scalar}
Let $T \in \R $, $X_{t}^{T}(x) $ be a $\frac12 g(T-t)$-BM$(x)$, $\tpar^{T}_{0,t} $ the parallel transport, $v \in T_{x}M $ and $\phi_{t}v $ the solution of the following equation:
  $$ \tpar^{T}_{0,t} d \left((\tpar^{T}_{0,t})^{-1} \phi_{t}v\right) = -\left(\frac32 r -2R\left(T-t,X_{t}^{T}(x)\right)\right)\phi_{t}v  \,dt $$
$$ \phi_{0}=\Id_{T_{x}M}.$$
Then $dR(T-t,.)_{X_{t}^{T}(x)} \phi_{t}v  $ is a martingale and:
\begin{equation} \label{commetuveux} \Vert \nabla^{T}R(T,x) \Vert_{T} \leq \sup_{M}\Vert \nabla^{0}R(0,x) \Vert_{0} e^{-\frac32 rT} \mathbb{E}[e^{\int_{0}^{T} 2R(T-t,X_{t}^{T}(x))\,dt}]. \end{equation}
\end{proposition}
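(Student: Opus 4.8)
The plan is to reproduce, almost verbatim, the argument of Theorem \ref{damp}; the two new ingredients are the reaction term $R(R-r)$ in the scalar curvature equation and the two--dimensional identity $\Ric_{t}=\tfrac12 R(t,\cdot)\,g(t)$. First I would apply the frame bundle construction of Section~1 and Proposition \ref{mbg(t)} to the $C^{1,2}$ family $h(t):=\tfrac12 g(T-t)$: the associated Brownian motion is precisely the $\tfrac12 g(T-t)$-BM$(x)$ of the statement, and since $\D_{\frac12 g}=2\D_{g}$ its generator is $\D_{T-t}$ (not $\tfrac12\D_{T-t}$), which is exactly what matches the coefficient $1$ in front of $\D_{t}R$ in $\partial_{t}R=\D_{t}R+R(R-r)$. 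Let $U^{T}_{t}$ solve the corresponding parallel transport SDE, so $X^{T}_{t}(x)=\pi(U^{T}_{t})$ and $\tpar^{T}_{0,t}=U^{T}_{t}\circ(U^{T}_{0})^{-1}$; writing $\eta_{t}:=dR(T-t,\cdot)$ and using the scalarization $\widetilde{\eta_{t}}(U)=(\eta_{t}(Ue_{i}))_{i}$ one has
$$ dR(T-t,\cdot)_{X^{T}_{t}(x)}(\phi_{t}v)=\big\langle\widetilde{\eta_{t}}(U^{T}_{t}),(U^{T}_{t})^{-1}\phi_{t}v\big\rangle_{\R^{n}},$$
and it suffices to show that the drift of the right--hand side vanishes.

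I would then separate the drift into three groups, exactly as in the proof of Theorem \ref{damp}. (i) \emph{Time derivative of $\eta_{t}$.} Differentiating $\partial_{s}R=\D_{s}R+R(R-r)$ and using $d\D_{s}=d\square_{s}=\square_{s}d=\D_{s}d-\Ric_{s}d$ (the Weitzenb\"ock formula, as in Theorem \ref{damp}) together with $\Ric_{s}=\tfrac12 R(s,\cdot)g(s)$ gives $\partial_{s}(dR)=\D_{s}dR+(\tfrac32 R-r)\,dR$, so this group contributes $-\D_{T-t}dR(\phi_{t}v)\,dt-(\tfrac32 R-r)\,dR(\phi_{t}v)\,dt$. (ii) \emph{Horizontal part of the generator of $U^{T}_{t}$.} Just as the $\tfrac12 g(T-t)$-BM has generator $\D_{T-t}$, this part applied to $\widetilde{\eta_{t}}$ along $U^{T}_{t}$ (which stays $\tfrac12 g(T-t)$--orthonormal) produces $\widetilde{\D_{T-t}dR}(U^{T}_{t})$, i.e. the term $+\D_{T-t}dR(\phi_{t}v)\,dt$, which cancels the Laplacian in (i). (iii) \emph{Vertical drift and the damping $\phi_{t}$.} Since $\partial_{t}h(t)=-\tfrac12(r-R(T-t))\,g(T-t)$, the vertical coefficient $-\tfrac12\langle U^{T}_{t}e_{\alpha},U^{T}_{t}e_{\beta}\rangle_{\partial_{t}h(t)}$ equals $\tfrac12(r-R(T-t))\,\delta_{\alpha\beta}$ along $U^{T}_{t}$ (the frame being $\tfrac12 g(T-t)$--orthonormal forces $\langle U^{T}_{t}e_{\alpha},U^{T}_{t}e_{\beta}\rangle_{g(T-t)}=2\delta_{\alpha\beta}$), and computing $\sum_{\alpha}V_{\alpha\alpha}\widetilde{\eta_{t}}=\widetilde{\eta_{t}}$ as in Theorem \ref{damp} this group contributes $+\tfrac12(r-R(T-t))\,dR(\phi_{t}v)\,dt$; finally, since the defining equation for $\phi_{t}$ reads $d\big((\tpar^{T}_{0,t})^{-1}\phi_{t}v\big)=-(\tfrac32 r-2R(T-t,X^{T}_{t}))(\tpar^{T}_{0,t})^{-1}\phi_{t}v\,dt$, which is of finite variation, it contributes $-(\tfrac32 r-2R(T-t,X^{T}_{t}))\,dR(\phi_{t}v)\,dt$. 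Summing the surviving scalar coefficients,
$$-\big(\tfrac32 R-r\big)+\tfrac12(r-R)-\big(\tfrac32 r-2R\big)=0,$$
the drift vanishes, so $dR(T-t,\cdot)_{X^{T}_{t}(x)}(\phi_{t}v)$ is a local martingale; since $R$ is smooth and, by Hamilton's all--time existence and compactness of $M$, bounded on $[0,T]\times M$, it is a true martingale.

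For the gradient estimate I would integrate the pathwise finite--variation equation for $\phi_{t}$, obtaining $\phi_{t}v=\exp\!\big(-\tfrac32 rt+\int_{0}^{t}2R(T-s,X^{T}_{s}(x))\,ds\big)\,\tpar^{T}_{0,t}v$, whence by the isometry property of $\tpar^{T}_{0,t}$, $\Vert\phi_{T}v\Vert_{0}=e^{-\frac32 rT}e^{\int_{0}^{T}2R(T-s,X^{T}_{s}(x))\,ds}\Vert v\Vert_{T}$. Evaluating the martingale at times $0$ and $T$ and using $dR(T,x)(v)=\langle\nabla^{T}R(T,x),v\rangle_{T}$ gives
$$\langle\nabla^{T}R(T,x),v\rangle_{T}=\E\big[\langle\nabla^{0}R(0,X^{T}_{T}(x)),\phi_{T}v\rangle_{0}\big];$$
the Cauchy--Schwarz inequality then yields $|\langle\nabla^{T}R(T,x),v\rangle_{T}|\le\sup_{M}\Vert\nabla^{0}R(0,\cdot)\Vert_{0}\,\E[\Vert\phi_{T}v\Vert_{0}]$, and choosing $v=\nabla^{T}R(T,x)$ and dividing by $\Vert\nabla^{T}R(T,x)\Vert_{T}$ produces (\ref{commetuveux}).

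The step I expect to be the real obstacle is the bookkeeping of the factors $\tfrac12$ introduced by running the process with respect to $\tfrac12 g(T-t)$ instead of $g(T-t)$: these enter both the horizontal and the vertical parts of the generator of $U^{T}_{t}$ and have to be reconciled with the coefficient $1$ of $\D_{t}R$; any slip there alters the scalar $\tfrac32 r-2R$ in the definition of $\phi_{t}$ and destroys the final cancellation. Everything else --- the Weitzenb\"ock computation of $\partial_{s}(dR)$, and the passage from local to true martingale --- is routine and parallels Theorem \ref{damp} and Lemma \ref{lemme}.
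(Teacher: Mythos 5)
Your proposal is correct and follows essentially the same route as the paper: the same frame-bundle drift computation with the Weitzenb\"ock formula and the surface identity $\Ric_{t}=\tfrac12 R(t,\cdot)g(t)$, the same bookkeeping of the factors coming from using $\tfrac12 g(T-t)$, and the same cancellation yielding the coefficient $\tfrac32 r-2R$. The only cosmetic difference is that you integrate the finite-variation equation for $\phi_{t}v$ explicitly as $\lambda(t)\tpar^{T}_{0,t}v$, whereas the paper writes the equivalent ODE for $\Vert\phi_{t}v\Vert^{2}_{T-t}$; both give the same estimate.
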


\begin{proof}
 The proof is similar to the one in \ref{damp}, the difference is the reaction term: $ R(R-r)$. 
For notations and some details see the proof of \ref{damp}.
Take $F : x \mapsto x(x-r) $, then:
$$ \frac{\partial}{\partial t} R = \D_{t}R + F(R). $$
We write:
$$dR(T-t, .)\mid_{X_{t}^{T}(x)}\phi_{t}v = \langle \tilde dR(T-t,U_{t}^{T}),(U_{t}^{T})^{-1}\phi_{t}v \rangle_{\R^{2}} $$
where $U_{t}^{T}$ is a diffusion on $\mathcal{F}(M)$ with generator 
$$\D_{T-t}^{H} + \frac14 (r-R(T-t, \pi .))g(T-t)(\ev_{ei} . ,\ev_{ej} .)  V_{i,j}(.)  .$$ 
Using theorem \ref{damp}, we have:
\begin{align*}
\displaystyle  & d\langle \tilde dR(T-t,U_{t}^{T}),(U_{t}^{T})^{-1}\phi_{t}v \rangle_{\R^{2}}\\
& = \langle d(\tilde dR(T-t,U_{t}^{T})),(U_{t}^{T})^{-1}\phi_{t}v \rangle_{\R^{2}} \\
&+ \langle \tilde dR(T-t,U_{t}^{T}),d((U_{t}^{T})^{-1}\phi_{t}v) \rangle_{\R^{2}} \\
&\overset{d \cal M }{\equiv}   \Big[\frac{\partial}{\partial_{t}} (dR(T-t,.) )+ \D_{T-t}dR(t-t,.) +\frac12 (r-R(T-t,\pi .))dR(T-t,.)\Big](\phi_{t}v) \,dt  \\
&+ \langle \tilde dR(T-t,U_{t}^{T}),d((U_{t}^{T})^{-1}\phi_{t}v) \rangle_{\R^{2}} 
\end{align*}
Using Weitzenb\"ock formula and the equation for $R$ we get:
$$\frac{\partial}{\partial t} dR(T-t,.) = -[\D_{T-t}dR(T-t,.) - \Ric_{T-t}dR(T-t,.) + F^{'}(R(T-t,.))dR(T-t,.) ] $$
Recall that for the surface:
$$ \Ric_{T-t}dR(T-t,.) = \frac12 R(T-t,.)dR(T-t,.) , $$
consequently
\begin{align*}
& d\langle \tilde dR(T-t,U_{t}^{T}),(U_{t}^{T})^{-1}\phi_{t}v \rangle_{\R^{2}}\\
& \overset{d \cal M }{\equiv} ( \frac12  r - F^{'}(R(T-t,.))dR(T-t,.))(\phi_{t}v)\,dt +\langle \tilde dR(T-t,U_{t}^{T}),d((U_{t}^{T})^{-1}\phi_{t}v) \rangle_{\R^{2}} \\
&\overset{d \cal M }{\equiv}  
( \frac12  r - 2R(T-t,.) + r)dR(T-t,.))(\phi_{t}v)\,dt \\
&+\langle \tilde dR(T-t,U_{t}^{T}),(U_{t}^{T})^{-1}(-\frac32 r + 2R(T-t,.))\phi_{t}v) \rangle_{\R^{2}} \\
&\overset{d \cal M }{\equiv}  0,
\end{align*}
where we used the equation of $\phi_{t}v $ in the last step.

 For the second part of the proposition, with the equation for $ \phi_{t}v $ we have:
$$ d (\parallel\phi_{t}v \parallel_{T-t}^{2} )= (4R(T-t, X_{t}^{T}(x)-3r )\parallel\phi_{t}v \parallel_{T-t}^{2} \,dt,$$
so that
$$\parallel\phi_{T}v \parallel_{0}^{2} = \parallel\phi_{0}v \parallel_{T}^{2} e^{-3rT} e^{\int_{0}^{T} 4R(T-s,X_{s}^{T}(x))\,ds}. $$

Take $v= \nabla_{T}R(T,x) $ and average at time $0$ and $T$ (it is a true martingale because all coefficients are bounded) to get:
$$\Vert \nabla^{T}R(T,x) \Vert_{T} \leq \sup_{M}\Vert \nabla^{0}R(0,x) \Vert_{0} e^{-\frac32 rT} \mathbb{E}[e^{\int_{0}^{T} 2R(T-s,X_{s}^{T}(x))\,ds}].$$
\end{proof}

\begin{remark}
 For reaction-diffusion equations we can find by this calculation the correction to the parallel transport leading to a Bismut type formula for the gradient of the equation:
\begin{equation}
  \frac{\partial}{\partial t} f = \D_{t}f + F(f) ,\label{recdif}
\end{equation}
 where $\D_{t} $ is a Laplace Beltrami operator associated to a family of metrics $g(t)$. Let $X_{t}^{T}(x) $ be a $\frac12 g(T-t)-BM(x) $, $\tpar^{T}_{0,t} $ the associated parallel transport    and $v \in T_{x}M $. Consider the covariant equation:
$$\tpar^{T}_{0,t}d (\tpar^{T}_{0,t})^{-1} \Theta_{t}v = -\Big( \Ric^{\#,T-t}-\frac12 \Big[\frac{\partial}{\partial t}( g(T-t))\Big]^{\#,T-t} - F^{'}(f) \Big)\Theta_{t}v \,dt $$
Then for $f$ a solution of (\ref{recdif}) and  $v \in T_{x}M $ we obtain that: 
$$df(T-t, .) \Theta_{t}v $$
is a local martingale.
\end{remark}

\begin{corolary}
For $\chi(M)<0 $, there exists $C>0 $ depending only on $ g(0)$, such that:
$$\Vert \nabla^{T}R(T,x) \Vert_{T} \leq \sup_{M}\Vert \nabla^{0}R(0,x) \Vert_{0}\, e^{\frac12 rT}e^{2C(\frac{e^{rT}-1}{r})} . $$
\end{corolary}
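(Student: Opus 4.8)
The plan is to start from the previous Proposition \ref{prop-scalar}, which gives the pointwise bound
$$\Vert \nabla^{T}R(T,x) \Vert_{T} \leq \sup_{M}\Vert \nabla^{0}R(0,x) \Vert_{0}\, e^{-\frac32 rT}\, \mathbb{E}\big[e^{\int_{0}^{T} 2R(T-s,X_{s}^{T}(x))\,ds}\big],$$
and to control the exponential moment $\mathbb{E}[e^{\int_{0}^{T} 2R(T-s,X_{s}^{T}(x))\,ds}]$ by a deterministic quantity. The key observation is that, since $\chi(M)<0$, the constant $r$ (the average scalar curvature, a topological invariant by Gauss--Bonnet) is strictly negative; moreover the maximum principle applied to the evolution equation $\partial_{t}R = \Delta_{t}R + R(R-r)$ forces an upper bound on $R$. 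Concretely, if $R_{\min}(0)\le R(0,\cdot)\le R_{\max}(0)$ at the initial time, then comparing with the ODE $\dot\rho = \rho(\rho-r)$ one obtains a time-dependent upper barrier $R(t,\cdot)\le \rho^{+}(t)$ where $\rho^{+}$ solves that ODE with $\rho^{+}(0)=R_{\max}(0)$; because $r<0$ and $R_{\max}(0)<0$ in the $\chi(M)<0$ case (or after a short initial time, by Hamilton), $\rho^{+}(t)$ stays negative and in fact increases toward $0$, being uniformly bounded above by some constant. Thus there is $C>0$ depending only on $g(0)$ with $R(t,\cdot)\le 2C$ for all $t$ — actually one wants the sharper statement $R(T-s,X_{s}^{T}(x))\le r + C e^{r(T-s)}$ or similar, which is what the stated bound's exponent $2C(\frac{e^{rT}-1}{r})$ suggests.

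The key steps, in order, are: \textbf{(1)} invoke the scalar maximum principle for $\partial_{t}R = \Delta_{t}R + R(R-r)$ on the compact manifold $M$ to get a deterministic upper bound $R(t,y)\le \beta(t)$, where $\beta$ solves the associated Riccati-type ODE $\dot\beta = \beta(\beta-r)$, $\beta(0)=\sup_M R(0,\cdot)$; solve this ODE explicitly and observe that in the case $\chi(M)<0$ (so $r<0$) one has $\beta(t)\le r + C e^{rt}$ for a constant $C=C(g(0))$ (namely $C = \beta(0)-r>0$ when $\beta(0)>r$, with the solution $\beta(t) = \frac{r\beta(0)}{\beta(0)-(\beta(0)-r)e^{rt}}$, which one bounds from above by $r + Ce^{rt}$ for suitable $C$). \textbf{(2)} Substitute this into $\int_{0}^{T}2R(T-s,X_{s}^{T}(x))\,ds$: since the bound is deterministic, $\int_{0}^{T}2R(T-s,\cdot)\,ds \le \int_{0}^{T}2(r + Ce^{r(T-s)})\,ds = 2rT + 2C\frac{e^{rT}-1}{r}$. \textbf{(3)} Because the integrand is now bounded by a deterministic quantity, the expectation $\mathbb{E}[e^{\int_{0}^{T}2R(T-s,X_{s}^{T}(x))\,ds}]$ is bounded by $e^{2rT}e^{2C(e^{rT}-1)/r}$. \textbf{(4)} Plug back into the Proposition \ref{prop-scalar} estimate:
$$\Vert \nabla^{T}R(T,x)\Vert_{T}\le \sup_M\Vert\nabla^{0}R(0,x)\Vert_{0}\,e^{-\frac32 rT}\,e^{2rT}e^{2C(e^{rT}-1)/r} = \sup_M\Vert\nabla^{0}R(0,x)\Vert_{0}\,e^{\frac12 rT}e^{2C(e^{rT}-1)/r},$$
which is exactly the claimed inequality.

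The main obstacle is Step (1): getting the \emph{right} deterministic upper bound on $R$ with the correct constant, i.e. matching the exponent $2C\frac{e^{rT}-1}{r}$ exactly rather than just ``some bound''. This requires care with the maximum principle for the time-dependent Laplacian $\Delta_{t}$ (which is legitimate since $M$ is compact and the flow exists for all time by Hamilton), and with solving and estimating the Riccati ODE $\dot\beta=\beta(\beta-r)$ so that the upper barrier is precisely of the form $r + Ce^{rt}$ with $C$ depending only on $g(0)$ (through $\sup_M R(0,\cdot)$ and $r$). A minor subtlety is that one should check the sign: for $\chi(M)<0$ one has $r<0$, so $e^{rT}\to 0$ and the exponent $\frac12 rT\to -\infty$, giving exponential decay of the gradient of the scalar curvature — hence convergence of the metric to one of constant curvature, as announced. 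Everything else (Step 2--4) is routine estimation once the barrier is in hand.
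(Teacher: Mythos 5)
Your proposal follows essentially the same route as the paper: the paper's proof is a one-line citation of Proposition 5.18 in Chow--Knopf, which supplies exactly the deterministic bound $R(t,\cdot)\le r+Ce^{rt}$ (with $C$ depending only on $g(0)$) that you identify as the key input, followed by the same substitution $\int_{0}^{T}2(r+Ce^{r(T-s)})\,ds=2rT+2C\frac{e^{rT}-1}{r}$ and the cancellation $e^{-\frac32 rT}e^{2rT}=e^{\frac12 rT}$; steps (2)--(4) of your argument are exactly right. The one caveat is your sketched derivation of the barrier in step (1): the naive comparison with $\dot\beta=\beta(\beta-r)$, $\beta(0)=\sup_M R(0,\cdot)$, does \emph{not} yield $\beta(t)\le r+Ce^{rt}$ when $\sup_M R(0,\cdot)>0$ (which can happen even with $\chi(M)<0$) --- in that case the explicit solution you wrote down blows up in finite time, and the actual proof of the upper bound in Chow--Knopf is more delicate. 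Since the paper simply cites that estimate rather than proving it, this does not affect the validity of the corollary, but you should cite the bound rather than claim the ODE comparison proves it.
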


\begin{proof} 
We use proposition 5.18 in \cite{chowknopf}. In this case we have $ r<0 $ and a constant $C >0$ depending only on the initial metric  such that $R(t,.) \le r+Ce^{rt}$ and the estimate follows from previous proposition.
\end{proof}

\begin{remark}
 For the case $\chi(M)<0 $ we obtained an estimate which decreases exponentially.
For the case $\chi(M)>0$ one could control the expectation in (\ref{commetuveux}).
\end{remark}
%

\section{The point of view of the stochastic flow}

Let $g(t)_{[0,T_{c}[}$ be a $C^{1,2}$ family of metrics, and consider the heat equation:
\begin{equation}
  \left\{\begin{array}{l} 
  \partial_{t}f(t,x) = \frac12 \D_{t} f(t,x) \\
   f(0,x)=f_{0}(x),
   \end{array} \right .
\end{equation}
where $f_{0}$ is a function over $M$. We suppose that the solution of this equation exists until $T_{c}$. For $T < T_{c} $, let $X^{T}_{t}$ be a $g(T-t)$-Brownian motion and $\tpar^T_{0,t}$ the associated parallel transport.

We will build (c.f. (\ref{eqflow})) a family of semimartingales $ (T-t,X^{T}_{t}(x))  $ such as $X^{T}_{t}(x)$ is a $g(T-t)$-BM($x$) for all $x$ nearby $x_{0}$ and such that the family of martingales $ f(T-t,X^{T}_{t}(x))_{x}$ 
is differentiable at $x_{0}$ with respect to the parameter $x$. However, in  this section, we will not do it directly using stochastic flows in the sense of \cite{kunita}. 
Instead, we will use differentiation of families of martingales defined as limit in some semi-martingale space 
(the topology is as in \cite{Etop} which has been extended by Arnaudon, Thalmaier to the manifold case \cite{MThor}, \cite{MTstab}, 
\cite{MTcomp}, \cite{MTym}).

We work in the space-time $ I\times M$, its tangent bundle being identified to $ TI \times TM $ endowed with the cross connection $ \tilde \nabla = \overline{\nabla} \otimes \nabla_{T-t} $ where $\overline{\nabla} $ is the flat connection. Let $X_{t}^{T}(x_{0}) $ be a $g(T-t)$-BM started at $x_{0}$, and define $ Y_{t}(x_{0}) = (t, X_{t}^{T}(x_{0}))$ a $I \times M  $-valued semi martingale.
From now on $P^{\tilde \nabla }_{X,Y}$ stands for the parallel transport along the shortest $\tilde \nabla$-geodesic between nearby points $ X \in I\times M  $ and $ Y \in I\times M  $ for the connection $\tilde \nabla$.

Let $\tilde c $ a curve in $ I\times M$, we write $P^{\tilde \nabla }_{\tilde c}$ for the $\tilde \nabla$ parallel transport along $ \tilde c$ and for a curve $c$  in $M$ we denote by $\tpar_{c}^{T-s}$ the $\nabla^{T-s}$ parallel transport along $c$. We also denote
$\pi : I \times M \rightarrow M$ the natural projection.

For a curve $\gamma : t \longrightarrow (s,x_{t})$ in $I\times M  $, where  $s$ is a fixed time, we have the following observation:
 $$P^{\tilde \nabla }_{\gamma} = (\Id , \tpar_{\pi (\gamma)}^{T-s}).$$
 Define the It\^{o} stochastic equation in the sense of \cite{Emery-transfer}:
 \begin{equation} \label{eqflow}
  d^{\tilde \nabla } Y_{t}(x) = P^{\tilde \nabla }_{Y_{t}(x_{0}),Y_{t}(x)} d^{\tilde \nabla } Y_{t}(x_{0}) 
 \end{equation}
\begin{remark}
The above equation is well defined, for $x$ sufficiently close to $x_{0}$, because $d_{T-t}(X_{t}(x),X_{t}(x_{0})) $ is a finite variation process, with bounded derivative (by a short computation and  \cite{kendall}, \cite{cranston}).
\end{remark}

Let $ \tilde\tpar_{0,t} $  be the parallel transport, associated to the connection $\tilde \nabla$, over the semi martingale $Y_{t}(x_{0}) $.

In the next lemma, we will explain the relationship between the two parallel transport $ \tilde\tpar_{0,t} $ and $\tpar^T_{0,t}$.

\begin{lemma}\label{lemme-relation-transport}
  Let $(e_{i})_{i=1..n}$ be a orthonormale of $(T_{x_{0}}M,g(T))$ then
$$d((\tpar^{T}_{0, t})^{-1} d\pi \tilde\tpar_{0,t} )(0, e_{i})
=\frac12 (\tpar^{T}_{0, t})^{-1} (\frac{\partial}{\partial_{t}} g(T-t))^{\# T-t} (d\pi \tilde\tpar_{0,t} (0,e_{i}) )\,dt.$$
\end{lemma}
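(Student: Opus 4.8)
The plan is to compute $d\pi\,\tilde\tpar_{0,t}(0,e_i)$ as a $TM$-valued process along $X^T_t(x_0)$ and identify it, up to the stated drift, with $\tpar^T_{0,t}e_i$. The starting point is the observation already recorded in the excerpt that for a curve $\gamma$ staying at a fixed time-coordinate $s$, the $\tilde\nabla$-parallel transport splits as $P^{\tilde\nabla}_\gamma=(\Id,\tpar^{T-s}_{\pi(\gamma)})$; more generally, since $\tilde\nabla=\overline\nabla\otimes\nabla_{T-t}$ with $\overline\nabla$ flat, the first component of $\tilde\tpar_{0,t}$ applied to $(0,e_i)$ stays $0$, so $\tilde\tpar_{0,t}(0,e_i)=(0,Z_t)$ for some $TM$-valued process $Z_t$ above $X^T_t(x_0)$ with $Z_0=e_i$. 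The whole content of the lemma is an equation for $D^{S,T-t}Z_t$, the Stratonovich covariant derivative of $Z_t$ with respect to the moving connection $\nabla^{T-t}$, or equivalently for $(\tpar^T_{0,t})^{-1}Z_t$.

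First I would write down what it means for $(0,Z_t)$ to be $\tilde\nabla$-parallel along $Y_t(x_0)=(t,X^T_t(x_0))$: in a frame this says the Stratonovich differential of the frame-components of $(0,Z_t)$ is killed by the $\tilde\nabla$ connection form evaluated against $*dY_t=(dt,*dX^T_t)$. Splitting the connection form of $\tilde\nabla$ into its pieces, the $dt$-part sees the $t$-dependence of the family $\nabla^{T-t}$ and produces exactly a drift term; the $*dX^T_t$-part is the ordinary $\nabla^{T-t}$-parallel transport condition at frozen time. Concretely, the cross connection $\tilde\nabla_{(\partial_t,0)}(0,Y)=(0,\partial_t Y)$ where the $t$-derivative is taken in the sense of differentiating the connection, so the extra term is governed by $\partial_t(\text{Christoffel}^{T-t})$, which by the standard variation-of-Levi-Civita-connection formula is expressible through $\nabla^{T-t}$ applied to $\partial_t g(T-t)$. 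The cleanest route is to use the transfer principle exactly as in Proposition \ref{EDS_1}: compute $d\big(\langle Z_t,\,\tpar^T_{0,t}w\rangle_{T-t}\big)$ for an arbitrary $\tpar^T_{0,t}$-parallel field $\tpar^T_{0,t}w$, using the curve formula $\frac{d}{dt}\langle V,C\rangle_{g(t)}=\langle V,C\rangle_{\partial_1 g}+\langle\nabla^0_{\dot x}V,C\rangle+\langle V,\nabla^0_{\dot x}C\rangle$ promoted to semimartingales. Since $\tpar^T_{0,t}w$ has zero $D^{S,T-t}$ derivative and $Z_t$ is $\tilde\nabla$-parallel, the only surviving piece is the $\partial_1 g$ term plus the contribution of $D^{S,T-t}Z_t$; matching these gives $\langle D^{S,T-t}Z_t,\tpar^T_{0,t}w\rangle_{T-t}=-\tfrac12\langle Z_t,\tpar^T_{0,t}w\rangle_{\partial_t g(T-t)}$, i.e. $D^{S,T-t}Z_t=\tfrac12(\partial_t g(T-t))^{\#\,T-t}(Z_t)\,dt$, which rewritten via $(\tpar^T_{0,t})^{-1}$ is precisely the claimed identity.

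The main obstacle is bookkeeping the sign and the time-direction: $X^T_t$ is a $g(T-t)$-Brownian motion, so "$\partial_t$ of $g(T-t)$" carries a sign flip relative to "$\partial_1 g$ evaluated at $T-t$", and one must be careful that $\tilde\nabla=\overline\nabla\otimes\nabla_{T-t}$ is the connection on $I\times M$ indexed by the running time $t$ (hence the metric $g(T-t)$), not $g(t)$. The cleanest way to avoid a sign error is to do everything through the inner-product computation above rather than through Christoffel symbols, so that the factor $\tfrac12$ and the sign come directly out of equation (\ref{eq01}) / the $-\tfrac12\partial_1 G$ formula of Proposition \ref{EDS_1}, applied with the clock $t\mapsto T-t$. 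Once the $TM$-valued identity $D^{S,T-t}Z_t=\tfrac12(\partial_t g(T-t))^{\#\,T-t}(Z_t)\,dt$ is in hand, expressing $D^{S,T-t}$ in terms of $\tpar^T_{0,t}$ as $\tpar^T_{0,t}\,d\big((\tpar^T_{0,t})^{-1}\,\cdot\,\big)$ and noting $Z_t=d\pi\,\tilde\tpar_{0,t}(0,e_i)$ finishes the proof with no further work.
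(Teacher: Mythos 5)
Your overall strategy --- observe that $\tilde\tpar_{0,t}(0,e_{i})=(0,Z_{t})$ with $Z_{t}$ a $TM$-valued process over $X^{T}_{t}(x_{0})$, then run the time-dependent product rule of Proposition \ref{EDS_1} on $\langle Z_{t},\tpar^{T}_{0,t}e_{j}\rangle_{g(T-t)}$ --- is the same as the paper's. But the key step is carried out with the two covariant derivatives swapped, and this is a genuine error, not a sign issue. You assert that $\tpar^{T}_{0,t}w$ has zero $D^{S,T-t}$ derivative and that the unknown drift sits in $D^{S,T-t}Z_{t}$. It is exactly the opposite: by (\ref{Ddef}) and the drift in (\ref{parT}), the $g(T-t)$-parallel transport carries the nonzero vertical drift $D^{S,T-t}(U^{T}_{t}\tilde e_{j})=-\frac12(\partial_{t}g(T-t))^{\# T-t}(U^{T}_{t}\tilde e_{j})\,dt$ (this is precisely what keeps $U^{T}_{t}$ orthonormal for the moving metric), whereas $D^{S,T-t}\bigl(d\pi\,\tilde\tpar_{0,t}(0,e_{i})\bigr)=0$. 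Establishing this last vanishing --- which follows from $\tilde\tpar_{0,t}$ being the $\tilde\nabla$-horizontal lift of $Y_{t}$ together with the fact that $d\pi\circ d\ev_{e_{i}}$ respects the vertical splittings --- is the actual geometric content of the lemma, and your proposal never proves it; instead you derive a nonzero value for it by ``matching'', but there is nothing to match against unless you already know $d\langle Z_{t},\tpar^{T}_{0,t}w\rangle_{T-t}$, which is the quantity being computed.

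Your final formula nevertheless agrees with the statement because a second error compensates the first: the identification $D^{S,T-t}V_{t}=\tpar^{T}_{0,t}\,d\bigl((\tpar^{T}_{0,t})^{-1}V_{t}\bigr)$ fails here, again because $\tpar^{T}_{0,t}$ is not $D^{S,T-t}$-parallel; the correct relation is $\tpar^{T}_{0,t}\,d\bigl((\tpar^{T}_{0,t})^{-1}V_{t}\bigr)=D^{S,T-t}V_{t}+\frac12(\partial_{t}g(T-t))^{\# T-t}(V_{t})\,dt$, whose correction term is exactly the drift you misplaced. The correct accounting, as in the paper, is: the metric-variation term contributes $\langle Z_{t},\tpar^{T}_{0,t}e_{j}\rangle_{\partial_{t}g(T-t)}\,dt$, the drift of $\tpar^{T}_{0,t}$ contributes $-\frac12\,\partial_{t}g(T-t)(Z_{t},\tpar^{T}_{0,t}e_{j})\,dt$, and $Z_{t}$ contributes nothing; the sum is the $\frac12$ in the statement. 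To repair the argument you must (a) actually prove $D^{S,T-t}Z_{t}=0$ from the horizontal-lift property of $\tilde\tpar_{0,t}$, and (b) use the nonzero vertical drift of $U^{T}_{t}$ from (\ref{parT}) rather than declaring it zero.
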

\begin{proof}
The parallel transport $\tilde\tpar_{0, t}$ does not modify the time vector, i.e.,
$$\tilde\tpar^{-1}_{(t,X_{t})}(0,...) = (0,... ) ,$$
as can be shown for every curves, and hence for the semi-martingale $Y_{t} $ by the transfer principle. 

We identify 
 $\tilde{T}=\{ (0,v) \in T_{(0,x_{0})} I \times M \}$ and  $T_{x_{0}}M $ with the help of $(0,v) \longmapsto v$.
Hence
$$(\tpar^{T}_{0, t})^{-1}  d\pi \tilde\tpar_{0, t} : \tilde{T} \to T_{x_{0}}M$$
becomes an element in $ \mathcal{M}_{n,n}(\R)$.
 
Recall that $\tpar^{T}_{0, t} = U_{t}^{T} U_{0}^{T,-1} $. By definition of $D^{S,t}$ given in (\ref{cov_mart}). We get using the shorthand $e_{i}=U_{0}^{T} \tilde e_{i}$, with $(\tilde e_{i})_{i=1..n}$ an orthonormal frame of $\R^{n} $,
\begin{eqnarray*}
 *d((\tpar^{T}_{0, t})^{-1} d\pi \tilde\tpar_{0, t})&=& *d (\langle (\tpar^{T}_{0, t})^{-1} d\pi \tilde\tpar_{0, t} e_{i},e_{j} \rangle_{T})_{i,j} \\
&=&*d (\langle  d\pi \tilde\tpar_{0,t} e_{i},\tpar^{T}_{0, t} e_{j} \rangle_{T-t})_{i,j} \\
&=&\Big( \langle D^{S,T-t}  d\pi \tilde\tpar_{0,t} e_{i} ,U_{t}^{T} \tilde{e}_{j} \rangle_{T-t} \\
& & + \frac{\partial}{\partial_{t}} (g(T-t)) (d\pi \tilde\tpar_{0,t} e_{i} ,U_{t}^{T} \tilde{e}_{j})\,dt \\
& &+ \langle   d\pi \tilde\tpar_{0,t} e_{i} ,D^{S,T-t} U_{t}^{T} \tilde{e}_{j} \rangle_{T-t} \Big)_{i,j}. \\
\end{eqnarray*} 
We also have:
$$\begin{array} {lcl}
  D^{S,T-t}  d\pi \tilde\tpar_{0,t} e_{i} &=& \mathcal{V}_{ d\pi \tilde\tpar_{0,t} e_{i}}^{-1} ( (*d ( d\pi \tilde\tpar_{0,t} e_{i}))^{v_{T-t}}) \\
 &=& \mathcal{V}_{d\pi \tilde\tpar_{0,t} e_{i}}^{-1} ( (dd\pi d\ev_{e_{i}} (*d\tilde\tpar_{0,t}) )^{v_{T-t}}) \\ 
 &=& \mathcal{V}_{d\pi \tilde\tpar_{0,t} e_{i}}^{-1} ( dd\pi (d\ev_{e_{i}} (*d\tilde\tpar_{0,t}) )^{\tilde v}) \\  
&=& 0 .
\end{array}$$
Where we have used in the last equality the fact that $\tilde \tpar_{0,t}  $ is the $\tilde \nabla  $ horizontal lift of $Y_{t} $. The third one may be seen for curves, it comes from the definition of $\tilde \nabla  $. 

Following computations similar to one in the first section, we have by (\ref{Ddef}):
$$\begin{array} {lcl}
 *d((\tpar^{T}_{0, t})^{-1} d\pi \tilde\tpar_{0,t} )_{i,j} &=&\frac{\partial}{\partial_{t}} g(T-t) (d\pi \tilde\tpar_{0,t} e_{i} ,U_{t}^{T} \tilde{e}_{j})\,dt \\
& +& \langle   d\pi \tilde\tpar_{0,t} e_{i} ,D^{S,T-t} U_{t}^{T} \tilde{e}_{j} \rangle_{T-t} \\
& =& \frac{\partial}{\partial_{t}} g(T-t) (d\pi \tilde\tpar_{0,t} e_{i} ,U_{t}^{T} \tilde{e}_{j})\,dt \\
& +& \langle   d\pi \tilde\tpar_{0,t} e_{i}, - \frac12 \sum_{\alpha=1}^{d}\frac{\partial}{\partial_{t}} g(T-t)(U_{t}^{T} \tilde{e}_{j},U_{t}^{T} \tilde{e}_{\alpha} )U_{t}^{T} \tilde{e}_{\alpha} \rangle_{T-t} \,dt \\
&=& \frac{\partial}{\partial_{t}} g(T-t) (d\pi \tilde\tpar_{0,t} e_{i} ,U_{t}^{T} \tilde{e}_{j})\,dt \\
&-&  \frac12 \sum_{\alpha=1}^{d}\frac{\partial}{\partial_{t}} g(T-t)(U_{t}^{T} \tilde{e}_{j},U_{t}^{T} \tilde{e}_{\alpha} )\langle   d\pi \tilde\tpar_{0,t} e_{i},U_{t}^{T} \tilde{e}_{\alpha} \rangle_{T-t} \,dt\\
&=& \frac12 \frac{\partial}{\partial_{t}} g(T-t) (d\pi \tilde\tpar_{0,t} e_{i} ,U_{t}^{T} \tilde{e}_{j})\,dt .\\
\end{array}$$

In the general case, and by previous identification:
\begin{eqnarray} \label{partpi}
d((\tpar^{T}_{0, t})^{-1} d\pi \tilde\tpar_{0,t} )(0, e_{i})
&=&\frac12 \sum_{j} \frac{\partial}{\partial_{t}} g(T-t) (d\pi \tilde\tpar_{0,t} e_{i} ,U_{t}^{T} \tilde{e}_{j}) e_{j}\,dt\\
&=&\frac12 (\tpar^{T}_{0, t})^{-1} (\frac{\partial}{\partial_{t}} g(T-t))^{\# T-t} (d\pi \tilde\tpar_{0,t} (0,e_{i}) )\,dt.
\end{eqnarray}.
 
\end{proof}

 Differentiating (\ref{eqflow}) along a geodesic curve beginning at $(0,  x_{0})$ with velocity $(a,v) \in T_{0} I\times T_{x_{0}}M$ and using corollary 3.17 in \cite{MTstab} we get:
$$\tilde\tpar_{0,t} d \big( \tilde\tpar^{-1}_{0,t} TY_{t}(a,v) \big) = - \frac12 \tilde R (TY_{t}(a,v), dY_{t}(x_{0}))dY_{t}(x_{0}) , $$
where $\tilde R  $ is the curvature tensor. 

Let $v \in T_{x}M $ we write:
$$TX_{t} v := d\pi TY_{t}(0,v) .$$


In a more canonical way than theorem \ref{damp}, we have the following proposition.

\begin{proposition}\label{stoc_flow}
 For all $ v \in T_{x}M$ we have:
  $$ d ((\tpar^{T}_{0,t})^{-1}TX_{t} v) =  \frac12 (\tpar^{T}_{0,t})^{-1} ( (\frac{\partial}{\partial_{t}} g(T-t)) -  \Ric_{T-t})^{\#,T-t}(TX_{t} v) \,dt .\\  $$
\end{proposition}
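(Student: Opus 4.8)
The plan is to combine the two structural facts established just above: the differentiation formula for the stochastic flow (\ref{eqflow}) in terms of the cross-curvature $\tilde R$, namely
$$\tilde\tpar_{0,t} d \big( \tilde\tpar^{-1}_{0,t} TY_{t}(0,v) \big) = - \frac12 \tilde R (TY_{t}(0,v), dY_{t}(x_{0}))dY_{t}(x_{0}),$$
and Lemma \ref{lemme-relation-transport}, which relates the cross-connection parallel transport $\tilde\tpar_{0,t}$ to the spatial one $\tpar^T_{0,t}$ via a correction term involving $\partial_t g(T-t)$. First I would write $(\tpar^{T}_{0,t})^{-1}TX_t v = (\tpar^{T}_{0,t})^{-1} d\pi \, TY_t(0,v)$ and use the chain rule in It\^o form: the increment splits as the part where the ``$(\tpar^{T}_{0,t})^{-1} d\pi$'' operator acts on $d(TY_t(0,v))$ plus the part where $d((\tpar^{T}_{0,t})^{-1}d\pi\tilde\tpar_{0,t})$ acts on $\tilde\tpar^{-1}_{0,t}TY_t(0,v)$. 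The second of these is exactly what Lemma \ref{lemme-relation-transport} computes (equation (\ref{partpi})), producing the $\frac12(\partial_t g(T-t))^{\#,T-t}$ term; for the first one I would push the differentiation formula for $TY_t$ through $d\pi$.

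The main computational step is therefore to identify $d\pi$ applied to $-\frac12\tilde R(TY_t(0,v), dY_t(x_0))dY_t(x_0)$. Here I would unpack the cross connection $\tilde\nabla = \overline\nabla \otimes \nabla_{T-t}$: its curvature tensor $\tilde R$, acting on vectors of the form $(0,\cdot)$ and on the martingale part $dY_t(x_0) = (dt\text{-part}, *dX^T_t(x_0))$, reduces — after projecting to $M$ — to the spatial curvature $R_{T-t}$ contracted against the quadratic variation of $X^T_t(x_0)$. Since $X^T_t$ is a $g(T-t)$-Brownian motion with orthonormal frame $U^T_t$, the quadratic variation contraction of $R_{T-t}$ over an orthonormal basis is precisely the Ricci tensor, so $d\pi\big(-\frac12\tilde R(TY_t(0,v),dY_t)dY_t\big) = -\frac12 \Ric_{T-t}^{\#,T-t}(TX_t v)\,dt$, up to transport by $\tpar^T_{0,t}$. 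This is the standard ``$\frac12\mathrm{tr}\,R = \frac12\Ric$'' reduction that underlies the Dohrn--Guerra transport in the fixed-metric case; the only subtlety is keeping track of which metric ($g(T-t)$) the trace and the sharp are taken with respect to, which is consistent because $U^T_t$ is a $g(T-t)$-isometry.

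Adding the two contributions gives
$$d\big((\tpar^{T}_{0,t})^{-1}TX_t v\big) = \tfrac12(\tpar^{T}_{0,t})^{-1}(\partial_t g(T-t))^{\#,T-t}(TX_t v)\,dt - \tfrac12(\tpar^{T}_{0,t})^{-1}\Ric_{T-t}^{\#,T-t}(TX_t v)\,dt,$$
which is the claimed identity once the two $\#$-terms are combined under $(\partial_t g(T-t) - \Ric_{T-t})^{\#,T-t}$. I expect the main obstacle to be the bookkeeping in the curvature-to-Ricci reduction: carefully justifying that $d\pi$ intertwines $\tilde R$ with $R_{T-t}$ when one slot is the pure-time direction $(a,0)$ (whose contribution must vanish, since $\overline\nabla$ is flat and the cross connection has no mixed curvature) and the other is spatial, and that the It\^o quadratic-variation term of $X^T_t(x_0)$ really is $\sum_i g(T-t)(U^T_t e_i, U^T_t e_i)$-diagonal. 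Handling the finite-variation versus martingale decomposition of $dY_t$ and invoking corollary 3.17 of \cite{MTstab} in the time-dependent setting are the places where care is needed; the rest is the routine chain-rule splitting and the algebraic combination of the two drift terms.
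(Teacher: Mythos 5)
Your proposal is correct and follows essentially the same route as the paper: the same product-rule splitting of $(\tpar^{T}_{0,t})^{-1}TX_t v$ into the factor handled by Lemma \ref{lemme-relation-transport} and the factor handled by the curvature formula from \cite{MTstab}, the same observation that $\tilde R$ on $(0,\cdot)$-vectors reduces to $(0,R_{T-t}(\cdot,\cdot)\cdot)$ so that contracting against the quadratic variation of the $g(T-t)$-Brownian motion yields $-\frac12\Ric_{T-t}^{\#,T-t}$, and the same remark that the $TY$-factor is of finite variation so no cross-variation term appears.
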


\begin{proof}
 For a triple of tangent vectors $ (L_{t},L),(A_{t},A),(Z_{t},Z) \in TI \times TM$, we have:
$$\tilde R ((L_{t},L),(A_{t},A))(Z_{t},Z) = (0 , R_{T-t}(L,A)Z). $$ 
Hence, according to the relation $dY(x_{0}) = (dt,*dX_{t})=(dt, \tpar^{T}_{0,t}e_{i}*dW^{i})$ and the definition of the Ricci tensor:
\begin{equation} \label{floY}
\tilde\tpar_{0,t}  d \big{(}\tilde\tpar^{-1}_{0,t} TY_{t}(0,v) \big{)}= - \frac12 (0, \Ric^{\#T-t}( TX_{t} v )) \,dt .
\end{equation}
In order to compute in $\R^{n} $, we write:
\begin{equation} \label{floRic}
 (\tpar^{T}_{0, t})^{-1} TX_{t} v = ((\tpar^{T}_{0, t})^{-1} d\pi \tilde\tpar_{0,t}) ( \tilde\tpar^{-1}_{0,t} TY_{t}(0,v)).  
\end{equation}

By (\ref{floY}), we have $ d \big{(} \tilde\tpar^{-1}_{0,t} TY_{t}(0,v)\big{)}  \in d\mathcal{A} $ where $\mathcal{A} $ is the space of finite variation processes.
We get:
\begin{equation*} 
 d( (\tpar^{T}_{0, t})^{-1}TX_{t} v) =d((\tpar^{T}_{0, t})^{-1} d\pi \tilde\tpar_{0,t}) ( \tilde\tpar^{-1}_{0,t} TY_{t}(0,v)) + ((\tpar^{T}_{0, t})^{-1} d\pi \tilde\tpar_{0,t})  d ( \tilde\tpar^{-1}_{0,t} TY_{t}(0,v)) .\\
\end{equation*}

By (\ref{floRic}) and lemma \ref{lemme-relation-transport} we get:
$$\begin{array} {lcl}
 d((\tpar^{T}_{0, t})^{-1} TX_{t} v) &=& *d((\tpar^{T}_{0, t})^{-1} d\pi \tilde\tpar_{0,t})  ( \tilde\tpar^{-1}_{0,t} TY_{t}(0,v))\\
&+& ((\tpar^{T}_{0, t})^{-1} d\pi \tilde\tpar_{0,t})  *d ( \tilde\tpar^{-1}_{0,t} TY_{t}(0,v)) \\
&=&*d((\tpar^{T}_{0, t})^{-1} d\pi \tilde\tpar_{0,t})  ( \tilde\tpar^{-1}_{0,t} TY_{t}(0,v)) \\
&-& \frac12 ((\tpar^{T}_{0, t})^{-1} d\pi )(0,\Ric^{\#,T-t}( TX_{t} v)\,dt \\
&=& \frac12 (\tpar^{T}_{0, t})^{-1} (\frac{\partial}{\partial_{t}} g(T-t))^{\# T-t} (TX_{t} v) \,dt\\
&-&  \frac12 (\tpar^{T}_{0, t})^{-1} \Ric^{\#,T-t}(TX_{t} v) \,dt.\\
\end{array}$$
\end{proof}
%
 
For all $f_{0} \in C^{\infty}(M)$, and for $f(t, .)$ a solution of \eqref{heatforwardRicci}, $f(T-t,X_{t}^{T}(x)) $ is a martingale, where  $ (T-t,X_{t}^{T}(x)) = Y_{t}(x) $ is built as in \eqref{eqflow}. We have the following corollary which agrees with theorem \ref{damp}.

\begin{corolary} \label{coroflow}
 For all $v \in T_{x}M $:
$$ d f(T-t,X_{t}^{T}(x)) v =  df(T-t, . )_{X_{t}^{T}(x)} \tpar^{T}_{0,t} v  , $$
is a martingale.
\end{corolary}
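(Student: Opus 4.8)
The plan is to recognise the expression $df(T-t,X_t^T(x))v$ in the statement as the derivative, in the parameter $x$ and in the direction $v$, of the family of martingales $x\mapsto f(T-t,X_t^T(x))$, to represent that derivative through the tangent flow $TX_t v$, and then to show by means of the forward Ricci flow equation that $TX_t v=\tpar^T_{0,t}v$, so that the statement reduces to Theorem \ref{damp}.

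First I would set $F(s,y):=f(T-s,y)$ on $I\times M$, so that $f(T-t,X_t^T(x))=F(Y_t(x))$ with $Y_t(x)=(t,X_t^T(x))$ the flow built in \eqref{eqflow}. Since $f(\cdot,\cdot)$ solves \eqref{heatforwardRicci}, $F(Y_t(x))$ is a martingale for every $x$ near $x_0$, and by the construction of \eqref{eqflow} the family is differentiable at $x_0$ in the semimartingale topology used throughout this section; hence its derivative at $x_0$ in the direction $v$ is again a martingale. By the chain rule this derivative is $dF_{Y_t(x)}(TY_t(0,v))$; because the time component of $Y_t(x)$ does not depend on $x$, the vector $TY_t(0,v)$ has vanishing $TI$-part, so $dF_{Y_t(x)}(TY_t(0,v))=df(T-t,\cdot)_{X_t^T(x)}\big(d\pi\,TY_t(0,v)\big)=df(T-t,\cdot)_{X_t^T(x)}(TX_t v)$, which is exactly the quantity $df(T-t,X_t^T(x))v$ of the statement.

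Next I would use that under the forward Ricci flow $\frac{d}{dt}g_{i,j}=-\Ric_{i,j}$ one has $\frac{\partial}{\partial t}g(T-t)=\Ric_{T-t}$. Substituting this into Proposition \ref{stoc_flow} makes the two Ricci terms cancel, giving
$$ d\big((\tpar^T_{0,t})^{-1}TX_t v\big)=\frac12(\tpar^T_{0,t})^{-1}\big(\Ric_{T-t}-\Ric_{T-t}\big)^{\#,T-t}(TX_t v)\,dt=0, $$
so $(\tpar^T_{0,t})^{-1}TX_t v$ is constant in $t$. Since $TX_0 v=v$ and $\tpar^T_{0,0}=\Id_{T_xM}$, this constant is $v$, i.e. $TX_t v=\tpar^T_{0,t}v$ for all $t<T$. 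Combined with the first step this yields $df(T-t,X_t^T(x))v=df(T-t,\cdot)_{X_t^T(x)}\tpar^T_{0,t}v$, a martingale. This agrees with Theorem \ref{damp}: by the first part of Remark \ref{remarque01} the damped parallel transport satisfies $\textbf{W}^T_{0,t}=\tpar^T_{0,t}$ under the forward Ricci flow, and compactness of $M$ together with $T<T_c$ promotes the local martingale there to a true martingale.

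The delicate point, and the one I would take most care with, is the first step: justifying that the derivative of the family of martingales $f(T-t,X_t^T(x))$ is represented by $df(T-t,\cdot)(TX_t v)$ with $TX_t v=d\pi\,TY_t(0,v)$, and that it is genuinely a martingale. This is precisely where the choice of the flow \eqref{eqflow}, an It\^o SDE transported along $\tilde\nabla$-geodesics in the sense of Arnaudon--Thalmaier \cite{MTstab}, matters rather than a naive stochastic flow in the sense of \cite{kunita}: it is this construction that makes the family differentiable and allows differentiation under the martingale property. Once that framework is granted, the remainder is the one-line cancellation above, and no further obstacle arises.
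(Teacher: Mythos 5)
Your proposal is correct and follows essentially the same route as the paper: differentiate the family of martingales $f(T-t,X_t^T(x))$ in $x$ using the Arnaudon--Thalmaier framework and the chain rule to get the local martingale $df(T-t,\cdot)_{X_t^T(x)}(TX_tv)$, then apply Proposition \ref{stoc_flow} with $\partial_t g(T-t)=\Ric_{T-t}$ to identify $TX_tv=\tpar^T_{0,t}v$. The paper's proof is just a terser statement of exactly these steps, so no further comment is needed.
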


\begin{proof}
 By differentiation under $x$ of $f(T-t,X_{t}^{T}(x)) $, we get a local martingale. According to \cite{MTstab} and by chain rule for differential we get the corollary. 
 This result matches \ref{damp} after using the above proposition.
\end{proof}

In an canonical way, we have the following result.
\begin{stheorem}\label{th}
The following conditions are equivalent for a family $g(t)$ of metrics:
\begin{enumerate}[i)]
\item  $g(t)$ evolves under the forward Ricci flow.
\item For all $T < T_{c}$ we have $\tpar^{T}_{0,t} = \textbf{W}_{0,t}^{T}=TX_{t}$.
\item For all $T < T_{c}$, the damped parallel transport $\textbf{W}_{0,t}^{T}$ is an isometry. 
\end{enumerate}
\end{stheorem}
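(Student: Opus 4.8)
The plan is to establish the cycle of implications $i)\Rightarrow ii)\Rightarrow iii)\Rightarrow i)$, relying on the formulas already proved.

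\emph{$i)\Rightarrow ii)$.} Suppose $g(t)$ evolves under the forward Ricci flow, so $\partial_t g(T-t) = \Ric_{T-t}$ (with the probabilistic sign convention). Then in the defining equation of $\textbf{W}_{0,t}^{T}$ the term $\Ric_{g(T-t)} - \partial_t(g(T-t))$ vanishes identically, so $\textbf{W}_{0,t}^{T}$ solves $*d((\tpar^{T}_{0,t})^{-1}\textbf{W}_{0,t}^{T}) = 0$ with initial value $\Id$, forcing $\textbf{W}_{0,t}^{T} = \tpar^{T}_{0,t}$ (this is exactly the first item of Remark~\ref{remarque01}). Likewise, Proposition~\ref{stoc_flow} gives $d((\tpar^{T}_{0,t})^{-1}TX_t v) = \frac12(\tpar^{T}_{0,t})^{-1}((\partial_t g(T-t)) - \Ric_{T-t})^{\#,T-t}(TX_t v)\,dt$, and under the forward Ricci flow the bracket again vanishes, so $TX_t = \tpar^{T}_{0,t}$ as well (with $TX_0 = \Id$ since $Y_0(x) = (0,x)$). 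Hence $\tpar^{T}_{0,t} = \textbf{W}_{0,t}^{T} = TX_t$ for all $T<T_c$.

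\emph{$ii)\Rightarrow iii)$.} This is immediate: $\tpar^{T}_{0,t}$ is by construction an isometry $(T_{X_0}M, g(T)) \to (T_{X_t^T}M, g(T-t))$, so if $\textbf{W}_{0,t}^{T} = \tpar^{T}_{0,t}$ then $\textbf{W}_{0,t}^{T}$ is an isometry.

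\emph{$iii)\Rightarrow i)$.} This is the step I expect to be the main obstacle, since it must extract a pointwise tensor identity from a pathwise/stochastic statement. Assume $\textbf{W}_{0,t}^{T}$ is a $g(T-t)$-isometry for every $T<T_c$. Writing $Q_t := (\tpar^{T}_{0,t})^{-1}\textbf{W}_{0,t}^{T}$, which is an endomorphism of $(T_{X_0}M,g(T))$, the isometry property of both $\tpar^{T}_{0,t}$ and $\textbf{W}_{0,t}^{T}$ forces $Q_t$ to be $g(T)$-orthogonal for all $t$. Differentiating $\langle Q_t a, Q_t b\rangle_{g(T)} \equiv \langle a,b\rangle_{g(T)}$ and using the defining SDE $*dQ_t = -\frac12 (\tpar^{T}_{0,t})^{-1}(\Ric_{g(T-t)} - \partial_t g(T-t))^{\#\,g(T-t)}(\textbf{W}_{0,t}^{T})\,dt$, one finds that the symmetric (with respect to $g(T-t)$, pulled back by $\tpar^{T}_{0,t}$) part of $(\tpar^{T}_{0,t})^{-1}(\Ric_{g(T-t)} - \partial_t g(T-t))^{\#\,g(T-t)}\tpar^{T}_{0,t}$ must vanish along the path. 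But $\Ric_{g(T-t)} - \partial_t g(T-t)$, viewed as a bilinear form, is already symmetric, and $\tpar^{T}_{0,t}$ is a $g$-isometry, so this pulled-back operator is automatically $g(T)$-symmetric; hence "symmetric part vanishes" means the operator itself vanishes, i.e. $(\Ric_{g(T-t)} - \partial_t g(T-t))^{\#\,g(T-t)}(\textbf{W}_{0,t}^{T}) = 0$ along the path. Evaluating at $t=0$, where $\textbf{W}_{0,0}^{T} = \Id$ and $X_0^T = x$, gives $\Ric_{g(T)}(x) = \partial_t g(T-t)|_{t=0}(x)$; since $x\in M$ and $T<T_c$ are arbitrary (and $X_t^T(x)$ visits a neighborhood of $x$, so one may also localize the path argument), this is precisely $\partial_s g(s) = \Ric_{g(s)}$, the forward Ricci flow equation. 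The care needed here is to justify passing from an a.s.\ pathwise identity of finite-variation processes to the pointwise tensor identity, which follows because the drift is continuous in $(t,x)$ and $X_t^T(x)$ has full support near $x$ for small $t$; a short argument evaluating the quadratic variation of $\|\textbf{W}_{0,t}^{T}v\|_{T-t}^2$ (which is constant by hypothesis) against its It\^o expansion pins down the drift coefficient at $t=0$.
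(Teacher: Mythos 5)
Your proposal is correct and follows essentially the same route as the paper, whose own proof is a one-line appeal to the defining SDE of $\textbf{W}_{0,t}^{T}$, Proposition \ref{stoc_flow} and Theorem \ref{damp}; your write-up in fact supplies the details the paper omits, in particular the polarization/differentiation argument for $iii)\Rightarrow i)$, where evaluating the vanishing drift $(\Ric_{g(T-t)}-\partial_t g(T-t))(\textbf{W}_{0,t}^{T}v,\textbf{W}_{0,t}^{T}v)$ at $t=0$ for arbitrary $x$ and $T$ is exactly what is needed. One small slip: under the paper's probabilistic convention \eqref{forwardRicci} the conclusion $\Ric_{g(T)}=\partial_t g(T-t)\vert_{t=0}=-\partial_s g(s)\vert_{s=T}$ reads $\partial_s g(s)=-\Ric_{g(s)}$, not $\partial_s g(s)=\Ric_{g(s)}$ as written in your final sentence (your computation is consistent; only that last displayed identity has the sign reversed).
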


\begin{proof}
Here, the forward Ricci flow has probabilistic convention \eqref{forwardRicci}. The result follows by the equation of $ g(t)$ and by proposition \ref{stoc_flow} and theorem \ref{damp}.
\end{proof}

\section{Second derivative of the stochastic flow}

We take the differential of the stochastic flow in order to obtain a intrinsic martingale.
We take the same notation as the previous section, and $g(t)$ is a family of metrics coming from a forward Ricci flow. Let $X_{t}^{T}(x) $ be the $g(T-t)$-BM started at $x$, constructed as in the previous section by the parallel coupling of a $g(T-t)$-BM started at $x_{0}$ ( \ref{eqflow} ), $\tilde\nabla$ and $ Y_{t}(x) = (t, X_{t}^{T}(x))$ as before, define the intrinsic trace (that do not depend on the choice of $E_{i}$ as below):
$$\Tr \nabla_{.}TX_{t}(x_{0}) (.) := d \pi  \bigg(\sum_{i}  \tilde\nabla_{(0,e_{i})} TY_{t}(x)(0,E_{i}(x))  - TY_{t}(x) \tilde\nabla_{(0,e_{i})}(0,E_{i}(x))   \bigg)$$
where $(e_{i}) $ is a $ (T_{x_{0}}M, g(T))$ orthonormal basis, $E_{i}$ are vectors fields in $\Gamma TM $ such that $E_{i}(x_{0})=e_{i}$ and $ \tilde\nabla_{(0,e_{i})} TY_{t}(x)(0,E_{i}(x))$ is a derivative of a bundle-valued semi-martingale in the sense of (\cite{MThor}, \cite{MTstab}, \cite{MTcomp}).
By \ref{th}: 
 $$\Tr \nabla_{.}TX_{t}(x_{0}) (.) := d \pi  \sum_{i}  \tilde\nabla_{(0,e_{i})} TY_{t}(x)(0,E_{i}(x)) - \tpar^{T}_{0,t} d\pi (\sum_{i} \tilde\nabla_{(0,e_{i})}(0,E_{i}(x)) ) $$

\begin{stheorem}\label{mart-can} Let 
 $L_{t}:=(\tpar^{T}_{0,t})^{-1} \Tr \nabla_{.}TX_{t}(x_{0}) (.) $ be a $(T_{x_{0}}M,g(T)) $-valued process, started at $0$. 
Then:
\begin{enumerate}[i)] 
\item $L_{t}$ is a $(T_{x_{0}}M,g(T)) $-valued martingale, independent of the choice of $E_{i}$. 
\item The $g(T)$-quadratic variation of $L$ is given by $d[L,L]_{t} = \parallel \Ric^{T-t}(X_{t}(x_{0}))\parallel_{T-t}^{2} \,dt $.
\end{enumerate}
\end{stheorem}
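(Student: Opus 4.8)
The plan is to differentiate the defining equation for the stochastic flow twice, keeping careful track of which terms are of finite variation and which carry a genuine martingale part. Recall from the previous section that $TY_t$ satisfies
$$\tilde\tpar_{0,t} d \big( \tilde\tpar^{-1}_{0,t} TY_{t}(a,v) \big) = - \tfrac12 \tilde R (TY_{t}(a,v), dY_{t}(x_{0}))dY_{t}(x_{0}),$$
which is a finite-variation equation for the first-order derivative. To get the second derivative $\tilde\nabla_{(0,e_i)}TY_t(x)(0,E_i(x))$, I would invoke the stability/differentiation results of Arnaudon--Thalmaier (\cite{MTstab}, \cite{MTcomp}, \cite{MThor}) that allow one to differentiate the family of solutions of \eqref{eqflow} a second time in the semimartingale topology; the outcome is an equation for the second derivative whose martingale part comes only from the ``$d^2$'' of the driving term $dY_t(x_0)$, i.e. from differentiating the curvature/Ricci expression against the Brownian increments. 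All purely geometric terms ($\nabla R$ contributions, Christoffel-type corrections from the connection $\tilde\nabla$, the drift of $Y$) enter only as $dt$ terms. Summing over $i$ and projecting by $d\pi$, the $\nabla R$ and analogous terms should telescope or vanish by the second Bianchi identity together with the contracted form $\delta \Ric = -\tfrac12 dS$ and the fact that $g(t)$ solves the forward Ricci flow; what remains of finite-variation type is then absorbed exactly by the correction $\tpar^{T}_{0,t}$ in the definition of $L_t$, using Theorem \ref{th} (which identifies $\tpar^T_{0,t}=\textbf{W}^T_{0,t}=TX_t$ under Ricci flow) and Proposition \ref{stoc_flow}. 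This gives part i): $L_t$ is a local, hence (by compactness of $M$ and boundedness of all coefficients on $[0,T]$) a true $(T_{x_0}M,g(T))$-valued martingale, and independence of the choice of $E_i$ is built into the definition of the intrinsic trace.

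For part ii) I would compute $d[L,L]_t$ directly from the martingale part identified above. The only source of quadratic variation is the term obtained by differentiating the expression $-\tfrac12 \tilde R(TY_t(0,e_i), dY_t(x_0))dY_t(x_0)$ a second time in the flow direction and contracting; after projecting and summing over $i$, and using $dY_t(x_0)=(dt,\tpar^T_{0,t}e_j\, dW^j)$ so that $dW^j dW^k=\delta_{jk}\,dt$, the martingale increment of $\Tr\nabla_. TX_t(x_0)(.)$ is, in $\R^n$ after applying $(\tpar^T_{0,t})^{-1}$, a linear expression in $dW$ whose coefficient matrix is essentially $\Ric^{T-t}$ read in the frame $U^T_t$ (since $U^T_t$ is a $g(T-t)$-isometry). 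Squaring and tracing over the Brownian directions then yields $d[L,L]_t = \|\Ric^{T-t}(X_t(x_0))\|^2_{T-t}\,dt$, where the Hilbert--Schmidt norm arises precisely from summing the squared entries of the Ricci matrix in an orthonormal frame.

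The main obstacle I anticipate is the bookkeeping in the second differentiation: correctly separating the genuine martingale part from the many finite-variation contributions, in particular controlling the $\nabla R$ terms (which must cancel against the $dt$-terms coming from the correction transport $\tpar^T_{0,t}$ and from $\tilde\nabla$-Christoffel symbols via the second Bianchi identity and $\partial_t S = \Delta S + 2\|\Ric\|^2$), and making rigorous the ``differentiate the SDE for the flow twice'' step in the semimartingale-space framework of \cite{MTstab}. Once that is in place, the identification of the martingale increment as $\Ric^{T-t}$-valued and the quadratic variation computation are essentially the same isometry-and-Lévy-theorem arguments already used in Propositions \ref{EDS_1} and \ref{eq-local}. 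I would therefore structure the proof as: (1) state precisely the second-derivative equation from \cite{MTstab}; (2) project, sum over $i$, and identify the finite-variation part, showing it is killed by the $(\tpar^T_{0,t})^{-1}$ correction using Theorem \ref{th}; (3) read off the martingale part, apply $U^T_t$-isometry; (4) compute $d[L,L]_t$.
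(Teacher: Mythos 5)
Your plan follows essentially the same route as the paper: differentiate the flow equation \eqref{eqflow} a second time via the Arnaudon--Thalmaier commutation formula, trace over an orthonormal basis, cancel the drift against the correction coming from the relation between $\tilde\tpar_{0,t}$ and $\tpar^{T}_{0,t}$ (Lemma \ref{lemme-relation-transport}) using Theorem \ref{th}, read off the martingale increment $\sum_i\tpar_{0,t}^{T,-1}\Ric^{\#\,T-t}(\tpar^{T}_{0,t}e_i)\,dW^i$, and obtain the Hilbert--Schmidt norm as the quadratic variation. The one point where your anticipated mechanism differs is the cancellation of the $\nabla R$ terms: the paper does not need the second Bianchi identity or $\partial_t S=\Delta S+2\|\Ric\|^2$, but only the commutation of the trace with the covariant derivative, $\Tr_{1,3}(\nabla_X R)=-\nabla_X\Ric^{\#}$, after which the traced $\nabla R$ term cancels directly against the $\sum_i(\nabla_{TX_te_i}\Ric^{\#})(TX_te_i)$ term produced by the chain rule, because $TX_te_i=\tpar^{T}_{0,t}e_i$ under the forward Ricci flow.
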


\begin{proof}
Recall that by the same construction of the previous section:
$$\tilde{D} (TY_{t}(x)(0,E_{i}(x))) = -\frac12 \tilde{R}(TY_{t}(x)(0,E_{i}(x)),dY_{t}(x))dY_{t}(x) .$$
By the general commutation formula (e.g. theorem 4.5 in \cite{MThor}), and by the previous equation which cancels two terms in this formula, we get:
\begin{align*}
 \tilde{D} \tilde\nabla_{(0,e_{i})}(TY_{t}(x)(0,E_{i}(x))) =&  \tilde\nabla_{(0,e_{i})} \tilde{D} (TY_{t}(x)(0,E_{i}(x))) \\
 & + \tilde{R}(d^{\tilde\nabla}Y_{t}(x_{0}),TY_{t}(x_{0})(0,e_{i})) TY_{t}(x_{0})(0,e_{i}) \\
 &  -\frac12 \tilde\nabla \tilde{R} (dY_{t}(x_{0}),TY_{t}(x_{0})(0,e_{i}),dY_{t}(x_{0})) TY_{t}(x_{0})(0,e_{i})\\
=& -\frac12  \tilde\nabla_{(0,e_{i})} (\tilde{R}(TY_{t}(x) (0,E_{i}(x)),dY_{t}(x))dY_{t}(x)) \\
 & + \tilde{R}(d^{\tilde\nabla}Y_{t}(x_{0}),TY_{t}(x_{0})(0,e_{i}))TY_{t}(x_{0})(0,e_{i}) \\
 & -\frac12 (\tilde\nabla_{dY_{t}(x_{0})} \tilde{R}) (TY_{t}(x_{0})(0,e_{i}),dY_{t}(x_{0})) TY_{t}(x_{0})(0,e_{i}).\\
\end{align*} 
Taking  trace in the previous equation we can go one step further. Recall that $(e_{i})_{i=1..n}$ is a orthogonal basis of $(T_{x_{0}}M, g(T)) $, and write for notation:
 $$ \tilde{\Ric}^{\#}_{(t,x)} (V)= (0,{\Ric}^{\# T-t}(d\pi V)),$$
 then:
$$
\begin{array}{lcl}
&\sum_{i}& \tilde{D} \tilde\nabla_{(0,e_{i})}(TY_{t}(x)(0,E_{i}(x))) \\
&=& -\frac12 \sum_{i}\tilde\nabla_{(0,e_{i})}(\tilde{\Ric}^{\#}_{Y_{t}(x)}(TY_{t}(x)E_{i}(x)))  \\
&& + \sum_{i} \tilde{R}(d^{\tilde\nabla}Y_{t}(x_{0}),TY_{t}(x_{0})(0,e_{i}))TY_{t}(x_{0})(0,e_{i})  \\
&& -\frac12 \sum_{i} (\tilde\nabla_{dY_{t}(x_{0})} \tilde{R}) (TY_{t}(x_{0})(0,e_{i}),dY_{t}(x_{0})) TY_{t}(x_{0})(0,e_{i})\\
&=&-\frac12 \sum_{i} (\tilde\nabla_{(TY_{t}(x_{0})(0,e_{i}))} \tilde{\Ric}^{\#}) (TY_{t}(x_{0})(0,e_{i})) \\
&& -\frac12  (\tilde{\Ric}^{\#}_{Y_{t}(x_{0})}(\sum_{i} \tilde\nabla_{(0,e_{i})} TY_{t}(x)(0,E_{i}(x)))) + \tilde{\Ric}^{\#}_{Y_{t}(x_{0})} (d^{\tilde\nabla}Y_{t}(x_{0})) \\
&&  -\frac12 \sum_{i} (\tilde\nabla_{dY_{t}(x_{0})} \tilde{R}) (TY_{t}(x_{0})(0,e_{i}),dY_{t}(x_{0})) TY_{t}(x_{0})(0,e_{i}).
\end{array}$$
In the last equality, we use the chain derivative formula, and derivation is taking with respect to $x$ .
We will make an independent computation for the last term in the previous equation. Let $\Tr$ stand for the usual trace:
$$\begin{array}{l}
 \sum_{i} (\tilde\nabla_{dY_{t}(x_{0})} \tilde{R}) (TY_{t}(x_{0})(0,e_{i}),dY_{t}(x_{0})) TY_{t}(x_{0})(0,e_{i}) \\
 = \sum_{i}(0,(\nabla_{dX_{t}}^{T-t} R^{T-t}) (TX_{t}(x_{0})e_{i},dX_{t}) TX_{t}(x_{0})e_{i}) \\
= \sum_{i,j} (0,(\nabla_{\tpar_{0,t}^{T}e_{j}}^{T-t} R^{T-t}) (TX_{t}(x_{0})e_{i},\tpar_{0,t}^{T}e_{j}) TX_{t}(x_{0})e_{i})\,dt\\
= \sum_{j}(0, \Tr_{1,3} (\nabla_{\tpar_{0,t}^{T}e_{j}}^{T-t} R^{T-t}) (\tpar_{0,t}^{T}e_{j})) \,dt\\
= \sum_{j}(0,(\nabla_{\tpar_{0,t}^{T}e_{j}}^{T-t} \Tr_{1,3} R^{T-t}) (\tpar_{0,t}^{T}e_{j})) \,dt\\
= - \sum_{j}(0,(\nabla_{\tpar_{0,t}^{T}e_{j}}^{T-t} \Ric^{\# T-t})(\tpar_{0,t}^{T}e_{j})) \,dt, 
\end{array}$$
where we have used in the second equality the fact that in case of the forward Ricci flow $\tpar_{0,t}^{T}$ is a $g(T-t)$ isometry and $ dX =\tpar_{0,t}^{T}e_{j} dW^{j}$.
In the last equality we use the commutation between trace and covariant derivative (for example \cite{Lee}, or \cite{Kob-Nom}).
Note that:
$$\begin{array}{l}
 \sum_{i} (\tilde\nabla_{(TY_{t}(x_{0})(0,e_{i}))} \tilde{\Ric}^{\#}) (TY_{t}(x_{0})(0,e_{i})) \\
 = \sum_{i} (0, (\nabla_{TX_{t}(x)e_{i}}^{ T-t}\Ric_{X^{T}_{t}(x)}^{\# T-t})(TX_{t}(x)e_{i})) \,dt\\
\end{array}$$
Hence, using \ref{th}:
$$ \begin{array}{l}
    \tilde{D} ( \sum_{i} \tilde\nabla_{(0,e_{i})} TY_{t}(x)(0,E_{i}(x)))\\
\quad= -\frac12  (\tilde{\Ric}^{\#}_{Y_{t}(x_{0})}(\sum_{i} \tilde\nabla_{(0,e_{i})} TY_{t}(x)(0,E_{i}(x))))
 + \tilde{\Ric}^{\#}_{Y_{t}(x_{0})} (d^{\tilde\nabla}Y_{t}(x_{0})) 
   \end{array}$$
Write, for simplicity, $ B $ for $  \sum_{i}  \tilde\nabla_{(0,e_{i})} TY_{t}(x)(0,E_{i}(x))$.
We compute:
$$\begin{array}{rcl}
   d( \tpar_{0,t}^{T,-1} d\pi  B) &=& d ([\tpar_{0,t}^{T,-1} d\pi \tilde{\tpar}_{0,t}][(\tilde{\tpar}_{0,t})^{-1} B]) \\
 &= & \frac12 \tpar_{0,t}^{T,-1} (\partial_{t} g(T-t))^{\#,T-t}(d\pi B)\,dt  \\
 && \quad+\tpar_{0,t}^{T,-1} (- \frac12 d\pi (\tilde{\Ric}^{\#}(B))+ d\pi ( \tilde{\Ric}^{\#}_{Y_{t}(x_{0})} (d^{\tilde\nabla}Y_{t}(x_{0})))) \\
&=&\tpar_{0,t}^{T,-1}(d\pi \tilde{\Ric}^{\#}_{Y_{t}(x_{0})} (d^{\tilde\nabla}Y_{t}(x_{0})))\\
&=& \sum_{i}\tpar_{0,t}^{T,-1}\Ric_{X^{T}_{t}(x)}^{\# T-t}(\tpar^{T}_{0,t}e_{i})dW^{i},\\
  \end{array}
 $$
where we have used lemma \ref{lemme-relation-transport} in  the first equality. We get a intrinsic martingale that does not depend on $E_{i}$, starting  at $0$. By the definition in theorem \ref{mart-can} and by the formula preceding theorem \ref{mart-can}, the above calculations yield:
 $$L_{t}= \int_{0}^{t} \sum_{i}\tpar_{0,t}^{T,-1}\Ric_{X^{T}_{t}(x)}^{\# T-t}(\tpar^{T}_{0,t}e_{i})dW^{i} - d\pi (\sum_{i} \tilde\nabla_{(0,e_{i})} (0,E_{i}(x))).$$ 
For the $g(T)$-quadratic variation of $L_{t}$ we use the isometry property of the parallel transport; we compute the quadratic variation: 
$$\begin{array}{rcl}
  d[L,L]_{t} &=& \langle \tpar_{0,t}^{T,-1}\Ric_{X^{T}_{t}(x)}^{\# T-t}(\tpar^{T}_{0,t}e_{i}),\tpar_{0,t}^{T,-1}\Ric_{X^{T}_{t}(x)}^{\# T-t}(\tpar^{T}_{0,t}e_{i}) \rangle_{T} \,dt\\
&=& \sum_{i} \parallel \Ric_{X^{T}_{t}(x)}^{\# T-t}(\tpar^{T}_{0,t}e_{i}) \parallel_{g(T-t)}^{2} \,dt\\
&=& \norm \Ric_{X^{T}_{t}(x)}^{\# T-t} \norm_{T-t}^{2}\,dt;\\
  \end{array}$$
where $\norm . \norm $ is the usual Hilbert-Schmidt norm of linear operator. By the independence of the choice of the orthonormal basis we can express this norm in terms of the eigenvalues of the Ricci operator:
$$d[L,L]_{t} = \sum_{i} \lambda_{i}^{2}(T-t,X_{t}^{T}(x)) \,dt .$$
\end{proof}

\begin{remark}
We could choose $E_{i}$ such that $\tilde\nabla_{(0,e_{i})} (0,E_{i}(x))=0$ that do not change the martingale $L$, but give a simple version.
\end{remark}

\begin{remark}
 This martingale can be used to look at the behavior at point where the first singularity of the Ricci flow occurs, 
i.e. where the norm of the Riemannian curvature explodes (\cite{chowknopf},\cite{MTym}). 
\end{remark}

\bibliographystyle{plain}  
\bibliography{/home/kolehe/article_1/biblio}
\end{document}